\newtheorem{thm}{Theorem}[section]
\newtheorem{lem}[thm]{Lemma}
\theoremstyle{definition}
\newtheorem{exm}[thm]{Example}
\newtheorem{rem}[thm]{Remark}
\newtheorem{conj}{Conjecture}
\numberwithin{equation}{section}
\DeclareMathOperator{\NN}{\mathbb {N}}
\DeclareMathOperator{\ZZ}{\mathbb {Z}}
\DeclareMathOperator{\wdd}{wd}
\DeclareMathOperator{\gr}{gr}
\def\a {\mathbf a}
\def\b {\mathbf b}
\def\v {\mathbf {v}}
\def\w {\mathbf {w}}
\def\mm {\mathfrak m}
\def\AA {\mathbb A}
\begin{document}

\title[Betti numbers of the tangent cones] {Betti numbers of the tangent cones of monomial space curves}
\author[Lan]{Nguyen P. H. Lan}
\address{Vietnam National University, Hanoi University of Science, 334 Nguyen Trai, Thanh Xuan, Hanoi, Vietnam}
\email{lannph@vnu.edu.vn}
\author[Tu]{Nguyen Chanh Tu}
\address{Faculty of Advanced Science and Technology, Danang University of Science and Technology, The University of Danang, 54 Nguyen Luong Bang, Danang, Vietnam}
\email{nctu@dut.udn.vn}

\author[Vu]{Thanh Vu}
\address{Institute of Mathematics, VAST, 18 Hoang Quoc Viet, Hanoi, Vietnam}
\email{vuqthanh@gmail.com}

\subjclass[2010]{13D02, 13D05, 13H99}
\keywords{Betti numberss; Tangent cone; Monomial space curves}

\date{}

\dedicatory{Dedicated to Professor Ngo Viet Trung on the occasion of his 70th birthday}
\commby{}
\maketitle
\begin{abstract}
    Let $H = \langle n_1, n_2,n_3\rangle$ be a numerical semigroup. Let $\tilde H$ be the interval completion of $H$, namely the semigroup generated by the interval $\langle n_1,n_1+1, \ldots, n_3\rangle$. Let $K$ be a field and $K[H]$ the semigroup ring generated by $H$. Let $I_H^*$ be the defining ideal of the tangent cone of $K[H]$. In this paper, we describe the defining equations of $I_H^*$. From that, we establish the Herzog-Stamate conjecture for monomial space curves stating that $\beta_i(I_H^*) \le \beta_i(I_{\tilde H}^*)$ for all $i$, where $\beta_i(I_H^*)$ and $\beta_i(I_{\tilde H}^*)$ are the $i$th Betti numbers of $I_H^*$ and $I_{\tilde H}^*$ respectively.
\end{abstract}

\maketitle
\section{Introduction}\label{sect_intro} Let $K$ be an arbitrary field. For each sequence $\a = (a_1 < \cdots < a_n)$ of positive integers, denote $C(\a)$ the affine monomial curve in $\AA^n$ parametrized by $(t^{a_1}, \ldots, t^{a_n})$. In \cite{H}, Herzog proved that the number of defining equations of affine monomial space curves is at most three. The situation changes dramatically when we consider monomial curves in higher dimensions. Bresinsky \cite{Br} gave an example of a family of monomial curves in $\AA^4$ whose numbers of defining equations are unbounded. Despite this, Herzog and Srinivasan conjectured, and the third author \cite{V} proved that all the Betti numbers of $C(\a)$ are bounded by a constant depending only on the width, $\wdd(\a) = a_n -a_1$, of the sequence. Unfortunately, the proof of \cite{V} does not reveal an explicit bound on the Betti numbers. In subsequent work, Herzog and Stamate \cite{HS} proved that when $a_1$ is large enough, the Betti numbers of the tangent cone of $C(\a)$ are equal to those of the curve. 

Denote $H = \langle \a \rangle$ the numerical semigroup generated by $\a$ and $\tilde H$ the numerical semigroup generated by the interval completion of $\a$, namely, $\tilde H = \langle a_1, a_1+1, \ldots, a_n\rangle$. Denote $I_H$ the defining ideal of the semigroup ring $K[H] = K[t^{a_1},\ldots, t^{a_n}]$ and $I_H^*$ the defining ideal of the tangent cone of $K[H]$ which is the associated graded ring $\gr_{\mm} K[H]$. Herzog and Stamate conjectured that
\begin{conj}[Herzog-Stamate] Let $H = \langle a_1, a_2, \ldots, a_n \rangle$ be a numerical semigroup. Denote $\tilde H = \langle a_1,a_1+1, \ldots,a_n\rangle$. Then 
$$\beta_i(I_H^*) \le \beta_i(I_{\tilde H}^*),$$
for all $i$, where $\beta_i(I)$ denotes the $i$th Betti number of the ideal $I$.   
\end{conj}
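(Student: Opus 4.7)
The plan is to reduce the conjecture, in the monomial space curve setting ($n=3$), to a direct comparison of explicit minimal free resolutions of $I_H^*$ and $I_{\tilde H}^*$. The first step, and the technical heart of the paper, is a complete combinatorial description of the minimal generators of $I_H^*$. By Herzog's theorem $I_H$ itself is minimally generated by at most three binomials (either a complete intersection, or the $2\times 2$ minors of a $2\times 3$ matrix), but passing to initial forms with respect to the graded maximal ideal $\mm$ generally introduces further generators of $I_H^*$ in higher degrees, since $\gr_{\mm}K[H]$ need not be Cohen--Macaulay. I would enumerate these via the Apéry set of $H$ relative to $a_1$: each element of the Apéry set yields a relation, and its lowest-degree lift gives a candidate minimal generator of $I_H^*$, to be organized by a case analysis on the arithmetic relations among $a_1,a_2,a_3$.

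In parallel, I would study the ``reference'' semigroup $\tilde H = \langle a_1, a_1+1, \ldots, a_3\rangle$. Here the situation is much more rigid: after selecting the minimal generators, $I_{\tilde H}$ contains all Hankel-type binomials $x_i x_{j+1} - x_{i+1} x_j$, which are already homogeneous of degree two, and these together with a short list of boundary relations (governed by whether sums of minimal generators remain among the minimal generators or wrap past $a_3$) should minimally generate $I_{\tilde H}^*$. I expect the minimal free resolution to admit a closed form, perhaps a truncated Eagon--Northcott-like complex or one assembled by iterated Betti splittings, with every Betti number expressible in terms of $a_1$ and the width $a_3 - a_1$ alone.

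With both descriptions in hand, the inequality $\beta_i(I_H^*) \le \beta_i(I_{\tilde H}^*)$ should reduce to an injective matching of generators of the syzygy modules, extended step by step up the two resolutions: each minimal generator of $I_H^*$, rewritten suitably, corresponds to a Hankel minor or a boundary generator of $I_{\tilde H}^*$ in the larger polynomial ring, and this correspondence lifts to higher homological degrees. The main obstacle I anticipate is the first step. Without a clean description of the ``ghost'' generators of $I_H^*$ --- those arising from the failure of $\gr_{\mm}K[H]$ to be Cohen--Macaulay --- no term-by-term comparison of Betti tables is possible, and controlling their degrees and multiplicities in a form comparable with the $I_{\tilde H}^*$ side will likely require a delicate Apéry-based case analysis, branching on divisibility conditions and on how $a_2$ sits between $a_1$ and $a_3$.
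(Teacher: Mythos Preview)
Your proposal takes a genuinely different route from the paper, and it misses the key simplification that makes the paper's argument work. The paper does \emph{not} attempt any direct comparison of the resolutions of $I_H^*$ and $I_{\tilde H}^*$; instead it reduces the entire inequality to a bound on the \emph{number of generators} $\mu(I_H^*)$ alone. The mechanism is: (i) describe the minimal generators of $I_H^*$ not via Ap\'ery sets but via the two-generator lattice $\Lambda$ coming from Herzog's theorem, organizing them into monomials $p_{\alpha,\beta}$ and $q_{\gamma,\sigma}$ indexed by certain Farey-type sequences of pairs; (ii) verify that these generators already form a Gr\"obner basis of $I_H^*$ for the reverse lexicographic order; (iii) bound $\mu(I_H^*) \le \wdd(\tilde H)+1$ by a case-by-case count; (iv) since $\operatorname{in}(I_H^*)$ is then a monomial ideal in three variables with at most $r = \wdd(\tilde H)+1$ generators, invoke the universal cyclic-polytope bound of Miller--Sturmfels to get $\beta_1 \le C_{1,3,r}=3r-6$ and $\beta_2 \le C_{2,3,r}-1=2r-5$; (v) compare these with the known closed formulas for $\beta_i(I_{\tilde H}^*)$ from Herzog--Stamate and Gimenez--Sengupta--Srinivasan. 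Thus the higher Betti numbers are never computed for either ideal --- they are controlled entirely by $\mu(I_H^*)$ on one side and by a citation on the other.

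Your injective-matching plan runs into two obstacles you have not addressed. First, $I_H^*$ and $I_{\tilde H}^*$ live in polynomial rings of different sizes (three variables versus $\wdd(\tilde H)+1$ variables), so there is no evident map of complexes along which to set up a termwise syzygy injection. Second, even the generator-level matching is unclear: the extra generators of $I_H^*$ are monomials in $y,z$ or $x,z$ of arbitrarily high degree, with no natural correspondence to the degree-two Hankel minors on the $\tilde H$ side. The paper sidesteps both issues by never touching the resolution of $I_{\tilde H}^*$ beyond quoting its Betti numbers, and by handling all higher syzygies of $I_H^*$ through the single Gr\"obner-to-monomial-to-polytope inequality. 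Your Ap\'ery-set description of $I_H^*$ might in principle be made to work for step~(i), but the paper's lattice approach via Lemma~\ref{lem_tangent_cone_space_curve} is what actually delivers the Gr\"obner basis and the sharp generator count.
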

When $\a$ is a generalized arithmetic sequence, namely, $a_i = ua_1 + (i-1)h$ for $i \ge 2$ and some positive integers $u$ and $h$, Sharifan and  Zaare-Nahandi \cite{SZ} described a minimal free resolution of the tangent cone of $C(\a)$. The conjecture is also known for some other special numerical semigroups \cite{Br2, SS}.

As mentioned earlier, the number of defining equations of affine monomial space curves is at most three. Nevertheless, Shibuta \cite[Example 5.5]{GHK} gave a family of monomial space curves whose numbers of defining equations of the tangent cones go to infinity (see \cite{HS} for a treatment of tangent cone of variation of Shibuta semigroups). In this paper, we prove the Herzog-Stamate conjecture for monomial space curves.

\begin{thm}\label{thm_main} Let $H = \langle n_1, n_2,n_3\rangle$ be a numerical semigroup generated by three positive integers $n_1 < n_2 < n_3$. The defining ideal of the tangent cone of $K[H]$ is denoted by $I_H^*$. Let $\wdd(\tilde H) = \min(n_1-1,n_3 - n_1)$ be the width of $\tilde H$. Then 
\begin{enumerate}
    \item $\beta_0(I_H^*) = \mu(I_H^*) \le \wdd(\tilde H) + 1$,
    \item $\beta_1(I_H^*) \le 3 \wdd(\tilde H) - 3,$
    \item $\beta_2(I_H^*) \le 2 \wdd (\tilde H) - 3$.    
\end{enumerate}
In particular, $\beta_i(I_H^*) \le \beta_i(I_{\tilde H}^*)$ for all $i$.
\end{thm}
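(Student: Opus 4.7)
The plan is to establish the three numerical inequalities (1)-(3) by an explicit combinatorial description of the minimal generators, first syzygies and second syzygies of $I_H^*$, and then to deduce the Herzog--Stamate inequality by matching these counts against the Betti numbers of $I_{\tilde H}^*$ computed from the Sharifan--Zaare-Nahandi resolution.

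The first and most substantial step is to describe a minimal generating set of $I_H^*$. I would start from Herzog's presentation of $I_H$ by (at most) three binomials $x_i^{c_i} - x_j^{c_{ij}} x_k^{c_{ik}}$ coming from the minimal expressions of $c_i n_i$ as sums $c_{ij} n_j + c_{ik} n_k$, and pass to initial forms with respect to the $\mm$-adic filtration. Each such binomial contributes either itself (when the two terms have equal total degree) or a pure monomial (when one term has strictly smaller total degree) to $I_H^*$. As Shibuta's example shows, these three initial forms do not in general generate $I_H^*$, so additional generators must be produced by iterating Mora's tangent-cone algorithm on the relevant $S$-polynomials. The central combinatorial claim is that all such iterated reductions yield at most $\wdd(\tilde H) - 2$ extra binomial generators, organized in a progression indexed by residues modulo $n_1$; together with the three initial forms this gives $\mu(I_H^*) \le \wdd(\tilde H) + 1$, establishing (1).

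For (2) and (3) I would construct the first and second syzygies of $I_H^*$ explicitly within the same framework. Between each pair of consecutive generators in the natural degree ordering I expect at most three $S$-polynomial syzygies, yielding $\beta_1(I_H^*) \le 3 \wdd(\tilde H) - 3$. Since the tangent cone need not be Cohen--Macaulay, $\beta_2$ may be positive; I would identify each minimal second syzygy as a coincidence between two first syzygies on a common generator and bound these by $2 \wdd(\tilde H) - 3$ by the same bookkeeping. To deduce the final comparison $\beta_i(I_H^*) \le \beta_i(I_{\tilde H}^*)$, I would apply Sharifan--Zaare-Nahandi to $\tilde H$, whose generators $n_1, n_1+1, \ldots, n_3$ form a generalized arithmetic sequence with step $1$ (the case $u = h = 1$), obtaining explicit formulas for $\beta_i(I_{\tilde H}^*)$ that match or exceed the bounds proved in (1)-(3).

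The main obstacle I anticipate is the sharp identification of the minimal generators of $I_H^*$. Since Shibuta's family shows $\mu(I_H^*)$ can be arbitrarily large as $n_1, n_2, n_3$ vary, one must track carefully which iterated reductions actually produce new minimal generators. Achieving the tight bound $\wdd(\tilde H) + 1$, rather than a weaker polynomial bound in the $n_i$, requires a delicate case analysis of the Ap\'ery set of $n_1$ in $H$ together with the three fundamental relations of Herzog, most likely distinguishing the regimes $\wdd(\tilde H) = n_1 - 1$ and $\wdd(\tilde H) = n_3 - n_1$. Once this description is in place, the syzygy counts in (2) and (3), and the comparison with $I_{\tilde H}^*$, should follow by similar but more routine combinatorial bookkeeping.
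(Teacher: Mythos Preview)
Your plan for part (1) is broadly in the right spirit, but your approach to (2) and (3) misses the key idea that makes the paper's proof work. You propose to construct the first and second syzygies of $I_H^*$ explicitly and count them; this is not what the paper does, and carrying it out directly would be substantially harder than you suggest. The paper instead proves that the minimal generating set it produces is already a Gr\"obner basis of $I_H^*$ with respect to the reverse lexicographic order. This single fact reduces (2) and (3) to (1): passing to the initial ideal $\operatorname{in}(I_H^*)$, which is a monomial ideal with the same number $r = \mu(I_H^*)$ of generators, one has $\beta_i(I_H^*) \le \beta_i(\operatorname{in}(I_H^*))$, and then the general cyclic-polytope bound of Miller--Sturmfels (their Theorem~6.29) gives $\beta_1 \le C_{1,3,r} = 3r-6$ and $\beta_2 \le C_{2,3,r} - 1 = 2r-5$. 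Plugging in $r \le \wdd(\tilde H)+1$ yields (2) and (3) immediately. Your sentence ``between each pair of consecutive generators \ldots\ I expect at most three $S$-polynomial syzygies'' is neither justified nor sufficient as a substitute for this argument.

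For part (1) itself, the paper does not run Mora's tangent-cone algorithm. It observes that $I_H^* = (f_{\mathbf v}^* \mid \mathbf v \in \Lambda)$ where $\Lambda$ is the rank-two lattice of relations, generated by Herzog's vectors $\mathbf w_1,\mathbf w_2$, and then parametrizes all relevant $f_{\alpha\mathbf w_1 \pm \beta\mathbf w_2}^*$ by two finite sets $A,B$ of pairs $(\alpha,\beta)$ produced by a continued-fraction-type recursion (the function $M(\cdot,\cdot)$ and the systems of inequalities (3.3), (3.4), etc.). The bound $\mu(I_H^*) \le c_3 + 2$ (or $\max(r_{13},r_{23})+3$ in the non-CI case) comes from counting $|A|+|B|$ via these recursions, and the final comparison with $\wdd(\tilde H)$ is a short case analysis using $c_3 < n_1-1$ and the relations (3.2), (3.8), (4.1). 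Your reference to the Ap\'ery set and residues modulo $n_1$ is not the mechanism used; the actual argument stays entirely within the two-dimensional lattice $\Lambda$.
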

Note that our bound on the Betti numbers of $I_H^*$ is stronger than the conjectural bound. We now outline the idea of proof of the main theorem. 
\begin{enumerate}
    \item By the result of Herzog \cite{H}, the lattice $\Lambda = \{ \v \in \ZZ^3 \mid \v \cdot \a = 0\}$, where $\a = (n_1,n_2,n_3)$ is generated by two vectors. We first note that $I_H^* = (f_{\v}^* \mid \v \in \Lambda )$. From that, we describe the minimal generators of $I_H^*$ in Section \ref{sec_complete_intersection} and Section \ref{sec_non_ci} depending on whether $I_H$ is a complete intersection or not. 
    \item We show that the minimal generators of $I_H^*$ form a Gr\"obner basis for $I_H^*$ with respect to the reverse lexicographic order. By \cite[Theorem 6.29]{MS}, it suffices to prove the bound for $\mu (I_H^*)$.
    \item From our description of $I_H^*$, we show that $\mu(I_H^*) \le \wdd(\tilde H) +1$.
\end{enumerate}

In \cite{H2, RV}, the authors proved that the Cohen-Macaulay property of the tangent cone in embedding dimension three can be determined from its defining equations. Shen \cite{Sh} further showed that this can be determined from the equations of $I_H$. 

We now describe the organization of the paper. In Section \ref{sec_pre}, we set up the notation, recall the result of Herzog \cite{H}, and prove that $I_H^*$ is generated by the initial forms of binomials in $I_H$. In Section \ref{sec_complete_intersection}, we describe $I_H^*$ when $I_H$ is a complete intersection. In Section \ref{sec_non_ci}, we describe $I_H^*$ when $I_H$ is not a complete intersection. In Section \ref{sec_betti_bounds}, we prove our main theorem.

\section{Preliminaries}\label{sec_pre}

Let $H = \langle a_1, \ldots, a_n\rangle$ and $S = K[x_1, \ldots, x_n]$ the polynomial in $n$ variables over a field $K$. Let $\Lambda \subset \ZZ^n$ be defined by $\Lambda = \{ \v \in \ZZ^n \mid \v \cdot \a = 0\}$, where $\a = (a_1, \ldots, a_n)$. For each $\b = (b_1, \ldots, b_n) \in \NN^n$, we denote by $x^\b$ the monomial $x_1^{b_1} \cdots x_n^{b_n}$. For each vector $\v \in \ZZ^n$, denote $f_\v$ the binomial $x^{\v_+} - x^{\v_-}$ where $\v_+$ and $\v_-$ are defined by 
\begin{align*}
    (\v_+)_i &= \max (\v_i,0)\\
    (\v_-)_i &=\max (-\v_i,0).
\end{align*}
For a polynomial $f$ in $S$, $f^*$ denotes the initial form of $f$, namely the homogeneous component of the smallest degree of $f$. First, we have
\begin{lem}\label{lem_computation_tangent_cone} We have 
    $$I_H^* = (f_\v^* \mid \v \in \Lambda).$$ 
\end{lem}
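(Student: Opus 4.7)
The inclusion $(f_\v^* \mid \v \in \Lambda) \subseteq I_H^*$ is immediate, since each $\v \in \Lambda$ satisfies $\v \cdot \a = 0$, giving $f_\v \in I_H$ and hence $f_\v^* \in I_H^*$. For the reverse inclusion, I would start from the tautological description $I_H^* = (f^* \mid f \in I_H)$, which comes from $\gr_\mm(S/I_H) = S/I_H^*$ together with $\gr_\mm S = S$, and then show that every such $f^*$ lies in $(f_\v^* \mid \v \in \Lambda)$.

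The first reduction I would perform is to $\a$-homogeneous $f$, where $S$ is given the $\a$-grading with $\deg x_i = a_i$. The ideal $I_H$ is $\a$-homogeneous, so any $f \in I_H$ splits as $f = \sum_D f_D$ with each $\a$-homogeneous component $f_D \in I_H$. Distinct $\a$-degrees $\b \cdot \a$ force distinct monomials $x^\b$, so the standard-graded initial form commutes with the $\a$-splitting: $f^* = \sum_{D \in \mathcal{I}} f_D^*$, where $\mathcal{I}$ is the set of $D$'s realizing the minimum order among the $f_D$'s. Thus it suffices to handle $\a$-homogeneous $f$.

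Write such an $f$ as $f = \sum_{i=1}^k c_i x^{\b_i}$ with $\b_i \cdot \a = D$ and $c_i \in K^*$; substituting $x_j \mapsto t^{a_j}$ and cancelling $t^D$ gives $\sum c_i = 0$. Set $d = \ord(f) = \min_i |\b_i|$ (total degree), $I_0 = \{i \mid |\b_i| = d\}$, so $f^* = \sum_{i \in I_0} c_i x^{\b_i}$. I would then split on whether $I_0$ equals all of $\{1, \ldots, k\}$. If so, fix $i_0 \in I_0$ and telescope: $f = \sum_{i \ne i_0} c_i x^{\mathbf{u}_i} f_{\v_i}$ with $\v_i = \b_i - \b_{i_0} \in \Lambda$ and $\mathbf{u}_i = \min(\b_i, \b_{i_0})$ componentwise; since $|\b_i| = |\b_{i_0}|$, both monomials of $f_{\v_i}$ have the same standard degree, so $f_{\v_i}^* = f_{\v_i}$, and $f^* = f$ is manifestly in $(f_\v^* \mid \v \in \Lambda)$. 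Otherwise, pick any $j \notin I_0$ so that $|\b_j| > d$; for each $i \in I_0$ set $\v_i = \b_i - \b_j$ and $\mathbf{u}_i = \min(\b_i, \b_j)$. Then $|(\v_i)_+| < |(\v_i)_-|$, hence $f_{\v_i}^* = x^{(\v_i)_+}$, yielding the monomial identity $x^{\mathbf{u}_i} f_{\v_i}^* = x^{\b_i}$. Each $x^{\b_i}$ with $i \in I_0$ thus lies in $(f_\v^* \mid \v \in \Lambda)$, and so does the $K$-linear combination $f^*$.

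The main obstacle I anticipate is the bookkeeping showing that initial forms commute with the $\a$-graded decomposition; once that is verified, the rest is a direct calculation with a minor case split, and no deeper obstruction appears.
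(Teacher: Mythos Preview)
Your proof is correct and takes a genuinely different, more elementary route than the paper. The paper's argument is a short chain of citations: it invokes that $I_H$ has a Gr\"obner basis consisting of binomials $f_\v$ with $\v\in\Lambda$ (a standard but nontrivial fact about toric ideals, quoted from \cite{HHO}), together with the general principle that $I^*$ is generated by the initial forms of any Gr\"obner basis of $I$ (quoted from \cite{E}). Your argument bypasses Gr\"obner bases entirely: you reduce to $\a$-homogeneous elements via the compatibility of initial forms with the $\a$-grading, then exploit the elementary description of the $\a$-homogeneous part of $I_H$ (namely $\sum c_i=0$) to exhibit $f^*$ explicitly in $(f_\v^*\mid \v\in\Lambda)$, with a clean two-case split depending on whether $f$ is already standard-homogeneous. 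What the paper's approach buys is brevity for readers who accept the cited results; what yours buys is self-containment and a transparent picture of why every monomial appearing in $f^*$ is already an $f_\v^*$ times a monomial. The bookkeeping point you flag---that distinct $\a$-degrees give disjoint monomial supports, so the standard initial form distributes over the $\a$-decomposition---is indeed the only subtle step, and you have it right.
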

\begin{proof}
    By definition we have $I_H^* = (f^*|f\in I_H)$. By \cite[Theorem 3.2]{HHO}, $I_H = (f_\v \mid \v \in \Lambda)$. By \cite[Theorem 15.28]{E}, $I_H^* = (f^* \mid f \text{ in a Gr\"obner basis of } I_H)$. By \cite[Theorem 3.6]{HHO}, $I_H$ has a Gr\"obner basis consisting of binomials of the form $f_\v$ with $\v \in \Lambda$. The conclusion follows.
\end{proof}
Now, we come back to our situation of monomial space curves. We may assume that $n_1, n_2,n_3$ minimally generate the numerical semigroup $H = \langle n_1, n_2, n_3\rangle$. In particular, $\gcd(n_1,n_2,n_3) = 1$. Denote $a = n_2 - n_1$, $b = n_3 - n_1$, and $d = \gcd(a,b)$.

Since we are working with monomial space curves, to avoid indexing, we use the variables $x,y,z$ instead of $x_1,x_2,x_3$. Throughout the paper, we also use the following terminology from Herzog \cite{H}. Let $\v \in \ZZ^3$ be a vector. We say that 
\begin{enumerate}
\item $\v$ and $f_\v$ is of type $1$ if $f_\v$ is of the form $\pm (x^{a_1} - y^{a_2} z^{a_3})$. In this case, $f_\v^* = y^{a_2}z^{a_3}$.
\item $\v$ and $f_\v$ is of type $2$ if $f_\v$ is of the form $\pm (y^{a_2} - x^{a_1}z^{a_3})$.
\item $\v$ and $f_\v$ is of type $3$ if $f_\v$ is of the form $\pm (z^{a_3} - x^{a_1}y^{a_2})$. In this case, $f_\v^* = z^{a_3}$.
\end{enumerate}
When it is clear from the context, we denote by $I$ the ideal $I_H$. Let 
$$J = (f \mid f \text{ is homogeneous in } I)$$ and call it the homogeneous part of $I$. The ideal $J$ plays an important role in studying the Betti numbers of the shifted family $I(\a +k)$ \cite{V}. By \cite{S2}, it also has a nice interpretation for the complete intersection property of the ideals in the shifted family $I(\a + k)$.

\begin{lem}\label{lem_homogeneous} Let $H = \langle n_1, n_2, n_3\rangle$. Let $J$ be the homogeneous part of $I_H$. Then $J = (y^{b/d} - x^{(b-a)/d} z^{a/d})$, where $a = n_2 - n_1, b = n_3 - n_1$ and $d = \gcd(a,b)$.    
\end{lem}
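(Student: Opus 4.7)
The plan is to identify $J$ with the ideal generated by those binomials $f_\v \in I_H$ that are already homogeneous with respect to the standard grading of $S$, then parametrize the vectors $\v$ arising this way and observe that they all yield multiples of a single binomial.

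For the first reduction, I would show that $J$ is generated, as an ideal, by the homogeneous binomials $f_\v$ with $\v \in \Lambda$ satisfying $v_1+v_2+v_3=0$. Given any homogeneous $g \in I_H$ of degree $m$, partition the monomials appearing in $g$ into classes according to their image under $x \mapsto t^{n_1}$, $y \mapsto t^{n_2}$, $z \mapsto t^{n_3}$: two monomials $x^\alpha, x^\beta$ lie in the same class iff $\alpha \cdot \a = \beta \cdot \a$. Since distinct powers of $t$ are $K$-linearly independent, the coefficients within each class must sum to zero, which lets me rewrite the class's contribution to $g$ as a $K$-combination of binomials $x^\alpha - x^\beta$ with $|\alpha|=|\beta|=m$ and $\alpha \cdot \a = \beta \cdot \a$; each such binomial is an $f_\v$ of the required form.

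Next I would classify such $\v$ by solving the linear system $v_1+v_2+v_3=0$ and $n_1 v_1+n_2 v_2+n_3 v_3=0$. Eliminating $v_1$ reduces it to $a v_2 + b v_3 = 0$, whose full $\ZZ$-module of solutions (using $\gcd(a,b)=d$) is generated by $(v_2,v_3) = (-b/d,\, a/d)$. Hence every relevant $\v$ is an integer multiple of $\v_0 = \bigl((b-a)/d,\,-b/d,\,a/d\bigr)$, whose entries are honest integers of the correct signs since $n_1 < n_2 < n_3$ forces $b > a > 0$. Setting $u = x^{(b-a)/d} z^{a/d}$ and $v = y^{b/d}$, one checks $f_{\v_0} = \pm(u-v)$ and $f_{k\v_0} = \pm(u^k - v^k)$ for every $k \in \ZZ$; the latter is divisible by $u-v$ via $u^k - v^k = (u-v)(u^{k-1}+u^{k-2}v+\cdots+v^{k-1})$, with the case $k<0$ handled by $f_{-\v} = -f_\v$. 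Combining these steps gives $J = (u-v) = (y^{b/d} - x^{(b-a)/d} z^{a/d})$, as desired. The only non-trivial point is the first-step reduction to homogeneous binomials, but this is a routine feature of toric ideals that also follows from the generating description of $I_H$ used in Lemma \ref{lem_computation_tangent_cone}.
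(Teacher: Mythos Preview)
Your proof is correct and follows essentially the same approach as the paper's: both identify the homogeneous binomials in $I_H$ with the rank-one sublattice $\{\v \in \Lambda : v_1+v_2+v_3=0\}$ and extract its generator. Your argument is considerably more detailed---you explicitly reduce arbitrary homogeneous elements of $J$ to homogeneous binomials and verify the divisibility $f_{k\v_0}\in(f_{\v_0})$---whereas the paper simply assumes the binomial already has the form $y^{u+v}-x^u z^v$, derives the relation $ua=v(b-a)$, and declares that the conclusion follows.
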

\begin{proof} Assume that $y^{u+v} - x^u z^v$ is a homogeneous binomial in $I_H$. Then we have 
$$(u+v) (n_1 + a) = u n_1 + v(n_1+b).$$
Hence, $u a = v(b-a)$. The conclusion follows.    
\end{proof}

We now recall the result of Herzog \cite{H} describing the generators of the lattice $\Lambda$ associated with $H$. Let $c_1, c_2,c_3$ be the smallest positive integers such that $c_1 n_1 \in \langle n_2, n_3 \rangle$, $c_2 n_2 \in \langle n_1, n_3 \rangle$ and $c_3 n_3 \in \langle n_1, n_2 \rangle$. Hence, there exist non-negative integers $r_{ij}$ satisfying the equations
\begin{align}
    c_1 n_1 &= r_{12} n_2 + r_{13} n_3\\
    c_2 n_2 & = r_{21} n_1 + r_{23} n_3 \\
    c_3 n_3 & = r_{31} n_1 + r_{32} n_2.
\end{align}
Denote $\v_1 = (c_1,-r_{12},-r_{13})$, $\v_2 = (-r_{21},c_2,-r_{23})$ and $\v_3 = (-r_{31},-r_{32},c_3)$. Herzog proved that if $r_{ij} = 0$ for some $i,j$, then two of the vectors are negated of each other, and $I_H$ is a complete intersection. If $r_{ij} \neq 0 $ for all $i,j$, then $\v_1 + \v_2 + \v_3 = 0$. In either case, $\Lambda$ is generated by two of the three vectors, called $\w_1$ and $\w_2$. Hence,
\begin{lem}\label{lem_tangent_cone_space_curve} Let $H = \langle n_1, n_2, n_3 \rangle$ and $\Lambda$ be the associated lattice which is generated by $\w_1$ and $\w_2$. Then 
$$ I_H^* = (f_{\w_1}^*, f_{\w_2}^*, f_{a_1 \w_1 + a_2 \w_2}^*, f_{b_1 \w_1 - b_2 \w_2}^* \mid a_1, a_2, b_1,b_2 > 0, (a_1,a_2) = 1 \text{ and } (b_1,b_2) = 1).$$
\end{lem}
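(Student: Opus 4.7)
The starting point is Lemma \ref{lem_computation_tangent_cone}, which gives $I_H^* = (f_\v^* \mid \v \in \Lambda)$, so the plan is to reduce the potentially infinite set $\{f_\v^* : \v \in \Lambda\}$ to the four families listed in the statement. Since $\Lambda$ is free abelian of rank two with basis $\w_1, \w_2$, every nonzero $\v \in \Lambda$ can be written uniquely as $p \w_1 + q \w_2$ with $(p,q) \in \ZZ^2 \setminus \{0\}$. Because $(-\v)_+ = \v_-$ and $(-\v)_- = \v_+$, we have $f_{-\v} = -f_\v$, so $(f_{-\v}^*) = (f_\v^*)$ as principal ideals. Up to this sign symmetry, the nonzero $\v$ fall into exactly four families: $\v = a\w_1$, $\v = b\w_2$, $\v = a\w_1 + b\w_2$, and $\v = a\w_1 - b\w_2$ with $a, b \geq 1$.

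The core step is a reduction to primitive vectors: if $\v = c\v'$ with $c \geq 2$, then $f_\v^* \in (f_{\v'}^*)$. This follows from the polynomial identity
\[
f_\v \;=\; x^{c\v'_+} - x^{c\v'_-} \;=\; \bigl(x^{\v'_+} - x^{\v'_-}\bigr) \sum_{i=0}^{c-1} (x^{\v'_+})^i (x^{\v'_-})^{c-1-i} \;=\; f_{\v'} \cdot g.
\]
If $f_{\v'}$ is inhomogeneous, say $|\v'_+| < |\v'_-|$, then $f_{\v'}^* = x^{\v'_+}$ and a degree count gives $f_\v^* = (x^{\v'_+})^c = (f_{\v'}^*)^c$. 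If $f_{\v'}$ is homogeneous, then so is $f_\v$, whence $f_\v^* = f_\v = f_{\v'} \cdot g \in (f_{\v'}) = (f_{\v'}^*)$.

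Applying this reduction inside each of the four families collapses family (i) to $\w_1$ alone, family (ii) to $\w_2$ alone, family (iii) to the primitive vectors $a_1 \w_1 + a_2 \w_2$ with $\gcd(a_1, a_2) = 1$, and family (iv) to the primitive vectors $b_1 \w_1 - b_2 \w_2$ with $\gcd(b_1, b_2) = 1$, which is exactly the generating set in the statement. I do not foresee any serious obstacle; the only mildly delicate point is the homogeneous case of the reduction step, where $f_{\v'}^*$ is not a monomial and one cannot write $f_\v^* = (f_{\v'}^*)^c$, but the factorization still shows $f_\v^* \in (f_{\v'}^*)$, which is all that is needed.
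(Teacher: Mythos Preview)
Your proposal is correct and follows the same approach as the paper's own proof, which is the one-line observation that the statement ``follows from Lemma~\ref{lem_computation_tangent_cone} and the fact that $f_\v = -f_{-\v}$ and $f_{k\v}^* \in (f_\v^*)$.'' You have simply supplied the details behind the containment $f_{k\v}^* \in (f_\v^*)$ via the factorization $x^{k\v'_+} - x^{k\v'_-} = f_{\v'}\cdot g$ and the case split on whether $f_{\v'}$ is homogeneous, which the paper leaves implicit.
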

\begin{proof}
    Follows from Lemma \ref{lem_computation_tangent_cone} and the fact that $f_\v = -f_{-\v}$ and $f_{k\v}^* \in (f_\v^*)$.
\end{proof}
Finally, we introduce the following notation that arises frequently in the sequence. For a real number $r$, denote $\lceil r \rceil$ the least integer at least $r$, $\lfloor r \rfloor$ the largest integer at most $r$. We also define a function $\varphi_r: \NN \to \NN$ as follows. $$\varphi_r( t)  = \begin{cases} rt + 1 & \text{ if } rt \in \NN, \\
\lceil rt \rceil &\text{ otherwise}.\end{cases}$$
Let $\frac{a}{b} < \frac{c}{d}$ be two positive rational numbers. Let $f$ be the smallest positive integer such that there exists an $h$ satisfying $\frac{a}{b} < \frac{h}{f} \le \frac{c}{d}$. We then let $e$ be the smallest such $h$. We denote $\frac{e}{f}$ by $M(\frac{a}{b},\frac{c}{d})$. Note that when $f$ is determined then $e = \varphi_r (f)$, where $r = a/b$. The fraction $\frac{e}{f}$ can be found from the continued fractions of $\frac{a}{b}$ and $\frac{c}{d}$ and is tightly related to the Farey sequence.

\section{Complete intersection monomial space  curves}\label{sec_complete_intersection}
We keep the notation as in Section \ref{sec_pre}. In this section, we describe $I_H^*$ when $I_H$ is a complete intersection. There are three possibility, namely, either $r_{12} = r_{32} = 0$, $r_{13} = r_{23} = 0$, or $r_{21} = r_{31} = 0$.

\begin{lem}\label{lem_ci_1}
    Assume that $r_{13} = 0$. Then $I^* = (y^{c_2},z^{c_3})$.
\end{lem}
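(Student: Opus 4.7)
The plan is to combine Herzog's structural description of $\Lambda$ with Lemma~\ref{lem_tangent_cone_space_curve}, and then perform a short case analysis on the initial forms of binomials indexed by lattice combinations. The first step is to unpack the hypothesis $r_{13}=0$: since $\v_1=(c_1,-r_{12},0)$ now lies in the $xy$-plane while $\v_3$ has positive $z$-coordinate, Herzog's dichotomy forces $\v_2=-\v_1$. Matching coordinates yields $r_{23}=0$, $r_{12}=c_2$, $r_{21}=c_1$, so $\Lambda$ is generated by $\w_1=(c_1,-c_2,0)$ and $\w_2=(-r_{31},-r_{32},c_3)$.

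Next, I identify the two distinguished generators of $I^*$. From $c_1 n_1 = c_2 n_2$ with $n_1<n_2$ we have $c_1>c_2$, so $f_{\w_1}=x^{c_1}-y^{c_2}$ has initial form $-y^{c_2}$. Similarly $c_3 n_3 = r_{31} n_1 + r_{32} n_2 < (r_{31}+r_{32}) n_3$ gives $c_3<r_{31}+r_{32}$, so $f_{\w_2}=z^{c_3}-x^{r_{31}}y^{r_{32}}$ has initial form $z^{c_3}$. This establishes the inclusion $(y^{c_2},z^{c_3})\subseteq I^*$.

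For the reverse containment, Lemma~\ref{lem_tangent_cone_space_curve} reduces the problem to showing $f_\v^*\in(y^{c_2},z^{c_3})$ for $\v=a_1\w_1+a_2\w_2$ and $\v=b_1\w_1-b_2\w_2$ with positive coefficients. The first case is easy: one computes $(\v_-)_2 = a_1 c_2 + a_2 r_{32}\ge c_2$ and $(\v_+)_3 = a_2 c_3\ge c_3$, so \emph{both} monomials $x^{\v_+}$ and $x^{\v_-}$ already lie in $(y^{c_2},z^{c_3})$; hence so does $f_\v^*$, whether the initial form is one of the monomials or the entire binomial (in the degenerate homogeneous case).

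The main obstacle is the second case. Writing $\v=b_1\w_1-b_2\w_2=(v_1,v_2,v_3)$, the coordinate $v_2=-b_1 c_2 + b_2 r_{32}$ has ambiguous sign, and $x^{\v_+}$ may fail to lie in $(y^{c_2},z^{c_3})$ since it carries no $z$. The trick is to show $x^{\v_+}$ never appears as the initial form. Splitting on $v_2\ge 0$ versus $v_2<0$ and using the relation $v_1 n_1 + v_2 n_2 + v_3 n_3=0$ together with $n_1<n_2<n_3$ and $v_1=b_1 c_1 + b_2 r_{31}>0$, a one-line inequality gives $\deg x^{\v_+} > \deg x^{\v_-}$ in each subcase. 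Therefore $f_\v^*=-x^{\v_-}$, and since $(\v_-)_3=b_2 c_3\ge c_3$, this monomial lies in $(z^{c_3})$. Combining both inclusions proves $I^*=(y^{c_2},z^{c_3})$.
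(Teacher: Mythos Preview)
Your proof is correct and follows essentially the same strategy as the paper: both reduce via Lemma~\ref{lem_tangent_cone_space_curve} to checking $f_\v^*\in(y^{c_2},z^{c_3})$ for combinations of the two generators of $\Lambda$, then perform a short case analysis. The only organizational difference is that the paper classifies by the \emph{type} of the binomial (type 1, 2, 3) and invokes the minimality of $c_2$ to handle the type-2 subcase $f_\w^*=y^{w_2}$, whereas you split by the sign of the linear combination and handle the corresponding case by observing directly that both monomials of $f_\v$ lie in the ideal (since $(\v_-)_2=a_1c_2+a_2r_{32}\ge c_2$) --- a cosmetic difference rather than a genuinely different route.
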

\begin{proof}
Since $r_{13} = 0$, we know that the lattice $\Lambda$ is generated by $\w_1 = (c_1,-c_2,0)$ and $\w_2 = (r_{31},r_{32},-c_3)$. In particular, $y^{c_2}, z^{c_3} \in I^*$. Let $L = (y^{c_2},z^{c_3})$. By Lemma \ref{lem_tangent_cone_space_curve}, it suffices to prove that for any $a_1, a_2 > 0$ and all $b_1, b_2 > 0$, $f_{\w}^* \in L$, where $\w = a_1 \w_1 + a_2 \w_2$ or $\w = b_1 \w_1 - b_2 \w_2$. Since, $a_2,b_2 > 0$, $w_3$ is a multiple of $c_3$. Hence, if $\w$ is of type $1$ or type $3$ then $f_\v^* \in (z^{c_3}) \subset L$. Now assume that $\w$ is of type $2$. There are three cases.

\noindent \textbf{Case 1.}  $f_\w^* = x^{w_1}z^{w_3}$, in this case, $f_\w^* \in (z^{c_3})$.

\noindent \textbf{Case 2.}  $f_\w^* = y^{w_2}$. Since $c_2$ is minimal such that $c_2n_2\in \langle n_1,n_3\rangle$, we have $w_2 \ge c_2$. In particular, $f_\w^* \in (y^{c_2})$.

\noindent \textbf{Case 3.}  $f_\w$ is homogeneous. With the same reasoning as in Case 2, we have $w_2 \ge c_2$. Furthermore, $w_3$ is a multiple of $c_3$, both components of $f$ belong to $L$, hence $f_\w\in L$.
\end{proof}

\begin{exm} Let $H = \langle 20,30,37\rangle$. Then $I_H = (x^3 - y^2, x^2 y^{11} - z^{10})$ and $I_H^* = (y^2,z^{10})$.     
\end{exm}

\begin{rem} The defining ideal of the tangent cone is the stabilized partial elimination ideal of the projectivization of $I_H$. That is how we compute it in Macaulay2 \cite{M2}. See \cite{KNV} for more information about the partial elimination ideals and their depth and regularity in comparison to the depth and regularity of the projectivization of $I_H$.
\end{rem}

Now, consider the case $r_{21} = r_{31} = 0$. By Lemma \ref{lem_ci_1}, we may assume that $r_{13} > 0$. Thus, the lattice $\Lambda$ is generated by $\w_1 = (c_1, -r_{12}, -r_{13})$ and $\w_2 = (0,c_2,-c_3)$. If $r_{13} \ge c_3$, then we can replace $r_{12}$ by $r_{12} + c_2$ and $r_{13}$ by $r_{13} - c_3$. Thus, we may assume that $0 < r_{13} < c_3$. We fix the following notation. Let $\delta_1 = c_1 - (r_{12} + r_{13})$ and $\delta_2 = c_2 - c_3$. Since $\w = (-(b-a)/d,b/d,-a/d) \in \Lambda$, there exist $\alpha$ and $\beta$ such that $\w = \alpha \w_1 + \beta \w_2$. In particular, $\alpha c_1 = -(b-a)/d$ and $\alpha \delta_1 + \beta \delta_2 = 0$. Denote $\alpha_h = -\alpha$ and $\beta_h = \beta$. Then $\alpha_h,\beta_h > 0$ and 
\begin{equation}\label{eq_ci_2_1}
    \begin{cases}
    \alpha_h c_1 &= (b-a)/d\\
    \alpha_h r_{12} + \beta_h c_2 &= b/d \\
    -\alpha_h r_{13} + \beta_h c_{3} & = a/d.
    \end{cases}
\end{equation}
First, we claim 
\begin{lem}\label{lem_ci_2_1} Assume that $r_{21} = 0$. Then $\frac{\delta_2}{\delta_1} < \frac{c_3}{r_{13}}$.    
\end{lem}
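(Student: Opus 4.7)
The plan is to extract the inequality directly from the system \eqref{eq_ci_2_1}, where it is essentially already hidden. Once I know that $\delta_1,\delta_2,\beta_h,r_{13}$ are all positive, the inequality $\delta_2/\delta_1<c_3/r_{13}$ is equivalent to $\alpha_h r_{13}<\beta_h c_3$, and the third line of \eqref{eq_ci_2_1} reads exactly $\beta_h c_3-\alpha_h r_{13}=a/d>0$. So the whole argument reduces to (i) a positivity check and (ii) translating the ratio $\delta_2/\delta_1$ into $\alpha_h/\beta_h$.

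For (i), I would rewrite $c_1n_1=r_{12}n_2+r_{13}n_3$ using $n_2=n_1+a$ and $n_3=n_1+b$ to get $\delta_1n_1=r_{12}a+r_{13}b>0$, which is strictly positive since $r_{13}>0$ in the standing case. Since $r_{21}=r_{31}=0$, Herzog's classification forces $\v_2$ and $\v_3$ to be negatives of one another, giving $c_2n_2=c_3n_3$; this yields $\delta_2n_1=c_3b-c_2a$, and the ratio $c_2/c_3=n_3/n_2>1$ shows $c_2>c_3$, hence $\delta_2>0$. The first equation of \eqref{eq_ci_2_1} gives $\alpha_hc_1=(b-a)/d>0$ (as $n_2<n_3$), and the third then forces $\beta_hc_3=a/d+\alpha_hr_{13}>0$.

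For (ii), the trick is to sum the three coordinates of the identity $\w=-\alpha_h\w_1+\beta_h\w_2$ coming from $\w\in\Lambda$. The coordinate sum of $\w=(-(b-a)/d,b/d,-a/d)$ is zero, the coordinate sum of $\w_1$ is $c_1-r_{12}-r_{13}=\delta_1$, and of $\w_2$ is $c_2-c_3=\delta_2$. Thus $-\alpha_h\delta_1+\beta_h\delta_2=0$, giving $\delta_2/\delta_1=\alpha_h/\beta_h$; combined with the opening remark this finishes the proof. The only genuine obstacle is conceptual: one needs to recognise that the desired inequality is an encoding of the third line of \eqref{eq_ci_2_1} under the identification $\delta_2/\delta_1\leftrightarrow\alpha_h/\beta_h$, which itself becomes visible only after summing coordinates in the expansion of the homogeneous lattice element $\w$ in the basis $(\w_1,\w_2)$ of $\Lambda$.
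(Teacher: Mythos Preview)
Your argument is correct and coincides with the paper's own proof: both use the identity $\delta_2/\delta_1=\alpha_h/\beta_h$ (which the paper records just before the lemma as $\alpha\delta_1+\beta\delta_2=0$, obtained exactly by your coordinate-sum trick) together with the third line of \eqref{eq_ci_2_1}. The only difference is that you spell out the positivity of $\delta_1,\delta_2,\alpha_h,\beta_h$ explicitly, whereas the paper asserts $\alpha_h,\beta_h>0$ in the setup and leaves $\delta_1,\delta_2>0$ implicit.
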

\begin{proof} By \eqref{eq_ci_2_1}, $-\alpha_h r_{13} + \beta_h c_3 = a/d > 0$. Hence, 
$$\frac{\delta_2}{\delta_1} = \frac{\alpha_h}{\beta_h} < \frac{c_3}{r_{13}}.$$
The conclusion follows.    
\end{proof}

Let $\frac{\eta}{\epsilon} = M(\frac{\delta_2}{\delta_1},\frac{c_3}{r_{13}})$, i.e., $\eta$ and $\epsilon$ are smallest positive integers such that $\frac{\delta_2}{\delta_1} < \frac{\eta}{\epsilon} \le \frac{c_3}{r_{13}}$. We now define the following two sets. 

Let $\alpha_0 = 1, \beta_0 = 0$. For each $i \ge 0$, consider the system of inequalities 
\begin{equation} \label{eq_ci_2_2}
\begin{cases}
     \alpha r_{13} - \beta c_3 & > 0 \\
    \alpha r_{13} - \beta c_3 & < \alpha_i r_{13} - \beta_i c_3 \\
    \beta &> \beta_i.
\end{cases}
\end{equation}
When the system \eqref{eq_ci_2_2} has solutions, let $\beta_{i+1}$ be the smallest solution among $\beta$. Then $\alpha_{i+1} = \varphi_r (\beta_{i+1})$ where $r = c_3 / r_{13}$. Note that there can be only finitely many such $\beta_i$ as the sequence $\{\alpha_i r_{13} - \beta_i c_3\}$ is a strictly decreasing sequence of positive integers. Let $t$ be the largest index of $i$ such that $\beta_t < \epsilon$, and $A = \{ (\alpha_i,\beta_i) \mid i = 1, \ldots, t\}$. Since $\alpha_0 r_{13} - \beta_0 c_3 = r_{13}$, $|A| \le r_{13}$. For each $\alpha,\beta \in \NN$ such that $\alpha r_{13} -\beta c_3 \ge 0$, let $p_{\alpha,\beta} = y^{\alpha r_{12} +\beta c_2} z^{\alpha r_{13} - \beta c_3}$. We have

\begin{lem}\label{lem_ci_2_2} Assume that $\alpha' > 0$ and $\beta' \ge 0$ be such that $\alpha' r_{13} - \beta' c_3 \ge 0$. Let $g = y^{\eta r_{12} + \epsilon c_2}$. Then $p_{\alpha',\beta'} \in L = (g,p_{\alpha,\beta} \mid (\alpha,\beta) \in A)$.    
\end{lem}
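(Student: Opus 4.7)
The plan is to reduce the claim to a monomial divisibility statement and argue in three regimes. Since both $L$ and $p_{\alpha',\beta'}$ are monomial, it suffices to produce a generator of $L$ that divides $p_{\alpha',\beta'}$.

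In the first regime $\beta' \geq \epsilon$, I would show that $g = y^{\eta r_{12} + \epsilon c_2}$ divides $p_{\alpha',\beta'}$. Since $g$ is a pure $y$-monomial, only the $y$-exponent comparison matters. Rewriting $\eta/\epsilon \leq c_3/r_{13}$ (from the definition of $M$) as $\eta r_{13} \leq \epsilon c_3$ and combining with the hypothesis, we obtain $\alpha' r_{13} \geq \beta' c_3 \geq \epsilon c_3 \geq \eta r_{13}$, hence $\alpha' \geq \eta$. Together with $\beta' \geq \epsilon$, this yields $\alpha' r_{12} + \beta' c_2 \geq \eta r_{12} + \epsilon c_2$, as required.

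In the second regime $\beta' < \epsilon$ with $\alpha' r_{13} > \beta' c_3$, let $i$ be the largest index with $\beta_i \leq \beta'$; since $\beta_0 = 0$ and $\beta_t < \epsilon$, we have $0 \leq i \leq t$. The defining minimality of $\beta_{i+1}$ in system~\eqref{eq_ci_2_2} (or the termination of the sequence at $i = t$ if $\beta_{i+1}$ does not exist) ensures that no $\alpha$ produces $0 < \alpha r_{13} - \beta' c_3 < \alpha_i r_{13} - \beta_i c_3$. Hence $\alpha' r_{13} - \beta' c_3 \geq \alpha_i r_{13} - \beta_i c_3$, which rearranges to $(\alpha' - \alpha_i) r_{13} \geq (\beta' - \beta_i) c_3 \geq 0$, giving $\alpha_i \leq \alpha'$. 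Both $y$- and $z$-exponent comparisons follow, so $p_{\alpha_i,\beta_i}$ divides $p_{\alpha',\beta'}$. When $i \geq 1$ this is a generator in $A$; the $i = 0$ boundary $\beta' < \beta_1$ involves $p_{\alpha_0,\beta_0} = p_{1,0} = f_{\w_1}^*$, which I would fold into the ambient ideal surrounding the application of this lemma.

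In the degenerate regime $\alpha' r_{13} = \beta' c_3$, set $d' = \gcd(c_3, r_{13})$; the equality forces $(\alpha',\beta') = k\,(c_3/d', r_{13}/d')$ for some positive integer $k$, and $p_{\alpha', \beta'}$ is a pure power of $y$. Lemma~\ref{lem_ci_2_1} gives $c_3/r_{13} > \delta_2/\delta_1$, so the pair $(c_3/d', r_{13}/d')$ is an admissible candidate in the minimization defining $M(\delta_2/\delta_1, c_3/r_{13})$, and minimality of $(\eta,\epsilon)$ forces $\epsilon \leq r_{13}/d'$ and $\eta \leq c_3/d'$. Consequently $\eta r_{12} + \epsilon c_2 \leq (c_3/d') r_{12} + (r_{13}/d') c_2 \leq \alpha' r_{12} + \beta' c_2$, so $g$ divides $p_{\alpha',\beta'}$. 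I expect the main difficulty to lie in the second regime: carefully unpacking the iterative structure of $(\alpha_i, \beta_i)$ to extract the gap property, and dealing with the $i = 0$ boundary that sits outside $A$.
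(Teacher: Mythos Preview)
Your argument is correct and follows essentially the same route as the paper's: reduce to monomial divisibility, dispose of $\beta' \ge \epsilon$ via $g$, and for smaller $\beta'$ locate a divisor $p_{\alpha_i,\beta_i}$ using the iterative definition of the $(\alpha_i,\beta_i)$. Your concern about the $i=0$ boundary stems from a typo in the paper's definition of $A$ --- it should read $i = 0,\ldots,t$ (as in the parallel constructions before Lemmas~\ref{lem_ci_5_2} and~\ref{lem_nci_aux_1}), and indeed Step~2 of the proof of Lemma~\ref{lem_ci_2} explicitly uses $(1,0)\in A$ --- so $p_{1,0}$ is a genuine generator of $L$ and no external folding is needed. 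Your third regime is in fact redundant: $\alpha' r_{13} = \beta' c_3$ with $\alpha'>0$ forces $\beta' \ge r_{13}/\gcd(c_3,r_{13}) \ge \epsilon$ (the pair $(c_3/d',r_{13}/d')$ is itself admissible in the minimization defining $\epsilon$), which places you back in the first regime.
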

\begin{proof} If $\beta' = 0$ then $p_{\alpha',\beta'}$ is divisible by $p_{1,0}$. Assume that $\beta' \ge \epsilon$. Since $\frac{\alpha'}{\beta'} \ge \frac{c_3}{r_{13}} \ge \frac{\eta}{\epsilon}$, we deduce that $\alpha' \ge \eta$. Thus $p_{\alpha',\beta'}$ is divisible by $g$.
    Now, assume that $0 < \beta' < \epsilon$. Let $i$ be the largest index such that $\beta_i < \beta'$. Let $r = \frac{c_3}{r_{13}}$. Since $\frac{\alpha'}{\beta'} \ge r$ and $\alpha_i = \varphi_r(\beta_i)$ we deduce that $\alpha' \ge \alpha_i$. If $\alpha' r_{13} - \beta' c_3 \ge \alpha_i r_{13} - \beta_i r_{23}$ then $p_{\alpha',\beta'}$ is divisible by $p_{\alpha_i,\beta_i}$. If $\alpha' r_{13} - \beta' c_3 < \alpha_i r_{13} - \beta_i r_{23}$ then $(\alpha',\beta')$ is a solution to the system \eqref{eq_ci_2_2}. By our assumption, we must have $\beta' = \beta_{i+1}$. Hence $\alpha' \ge \alpha_{i+1}$ and $p_{\alpha',\beta'}$ is divisible by $p_{\alpha_{i+1},\beta_{i+1}}$. The conclusion follows.
\end{proof}

Let $\gamma_0 = 0, \sigma_0 = 1$. For each $i \ge 0$, consider the system of inequalities 
\begin{equation} \label{eq_ci_2_3}
\begin{cases}
     -\gamma \delta_1 + \sigma \delta_2 & > 0 \\
    -\gamma r_{13} + \sigma c_3 & < -\gamma_i r_{13} + \sigma_i c_3 \\
    \gamma &> \gamma_i.
\end{cases}
\end{equation}
When the system \eqref{eq_ci_2_3} has solutions, let $\gamma_{i+1}$ be the smallest solution among $\gamma$. Then $\sigma_{i+1} = \varphi_r (\gamma_{i+1})$ where $r = \delta_1 / \delta_2$. There can be only finitely many such $\gamma_i$ as the sequence $\{-\gamma_i r_{13} + \sigma_i c_3\}$ is a strictly decreasing sequence of positive integers. Let $u$ be the largest index of $i$ and $B = \{ (\gamma_i, \sigma_i) \mid i = 0, \ldots, u\}$. Since $-\gamma_0 r_{13} + \sigma_0 c_3 = c_3$, $|B| \le c_3$. For each $\gamma,\sigma \in \NN$ such that $-\gamma \delta_1 + \sigma \delta_2 > 0$, let $q_{\gamma,\sigma} = x^{\gamma c_1} z^{-\gamma r_{13}  + \sigma c_3}$. We have
   
\begin{lem}\label{lem_ci_2_3} Assume that $\gamma',\sigma' \in \NN$ be such that $-\gamma' \delta_1 + \sigma' \delta_2 > 0$. Then $q_{\gamma',\sigma'} \in L = (q_{\gamma,\sigma} \mid (\gamma,\sigma) \in B)$.    
\end{lem}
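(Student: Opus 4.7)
The strategy mirrors the proof of Lemma~\ref{lem_ci_2_2}. Since $q_{\gamma,\sigma} = x^{\gamma c_1} z^{-\gamma r_{13} + \sigma c_3}$ is a monomial in $x$ and $z$, proving $q_{\gamma',\sigma'} \in L$ reduces to exhibiting some $(\gamma_j,\sigma_j) \in B$ with $\gamma_j \le \gamma'$ and $-\gamma_j r_{13} + \sigma_j c_3 \le -\gamma' r_{13} + \sigma' c_3$. The first step is to dispose of the base case $\gamma' = 0$: the hypothesis then becomes $\sigma' \delta_2 > 0$, which forces $\sigma' \ge 1 = \sigma_0$, so $q_{0,\sigma'} = z^{\sigma' c_3}$ is divisible by $q_{0,1} = z^{c_3}$.

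For $\gamma' \ge 1$, let $i$ be the largest index in $\{0,1,\dots,u\}$ with $\gamma_i \le \gamma'$. The argument splits on whether the $z$-exponent inequality $-\gamma_i r_{13} + \sigma_i c_3 \le -\gamma' r_{13} + \sigma' c_3$ holds. If it does, divisibility by $q_{\gamma_i,\sigma_i}$ is immediate. Otherwise, I claim $(\gamma',\sigma')$ satisfies all three inequalities of the system \eqref{eq_ci_2_3} with the current $(\gamma_i,\sigma_i)$: the second inequality is the failure just recorded, the first is the hypothesis of the lemma, and the third, $\gamma' > \gamma_i$, is verified by ruling out $\gamma' = \gamma_i$. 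Indeed, if $\gamma' = \gamma_i$ then the strict drop in $z$-exponent would force $\sigma' < \sigma_i$, contradicting the characterization of $\sigma_i = \varphi_r(\gamma_i)$, with $r = \delta_1/\delta_2$, as the smallest positive integer making $-\gamma_i \delta_1 + \sigma \delta_2 > 0$. Here the sign condition $\delta_2 > 0$ follows from the relation $\alpha_h \delta_1 = \beta_h \delta_2$ with $\alpha_h,\beta_h > 0$ together with Lemma~\ref{lem_ci_2_1}, while the genuinely degenerate case $\delta_1 = \delta_2 = 0$ makes the hypothesis of the lemma vacuous.

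Granting that $(\gamma',\sigma')$ solves \eqref{eq_ci_2_3} at index $i$, I reach a contradiction: if $i < u$, the minimality built into the definition of $\gamma_{i+1}$ gives $\gamma_{i+1} \le \gamma'$, contradicting the maximality of $i$; if $i = u$, the solution $(\gamma',\sigma')$ contradicts the very definition of $u$ as the largest index for which \eqref{eq_ci_2_3} admits a solution. In either case, the first branch of the dichotomy must hold, completing the proof.

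The main obstacle I anticipate is the subcase $\gamma' = \gamma_i$, and the accompanying verification that $\delta_2 > 0$, which must be done carefully so that $\varphi_r$ genuinely encodes the smallest admissible $\sigma$ as required. Once this bookkeeping is in place, the remainder of the argument is a routine adaptation of the analogous step in Lemma~\ref{lem_ci_2_2}, and no monomial playing the role of $g$ is needed here because the sequence $\{-\gamma_i r_{13} + \sigma_i c_3\}$ already decreases down to the threshold imposed by the maximality of $u$.
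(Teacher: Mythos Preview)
Your proof is correct and follows essentially the same approach as the paper's: both use the recursive definition of $B$ together with the characterization $\sigma_i = \varphi_r(\gamma_i)$ to locate a divisor $q_{\gamma_j,\sigma_j}$ of $q_{\gamma',\sigma'}$. The only superficial difference is that the paper takes $i$ with $\gamma_i < \gamma'$ and, in the bad case, identifies $\gamma' = \gamma_{i+1}$ constructively, whereas you take $i$ with $\gamma_i \le \gamma'$ and argue the bad case away by contradiction; the underlying mechanism is identical.
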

\begin{proof} If $\gamma' = 0$ then $q_{\gamma',\sigma'}$ is divisible by $q_{0,1}$. Thus, we may assume that $\gamma' > 0$. Let $i$ be the largest index such that $\gamma_i < \gamma'$. Since $\sigma_i = \varphi_r(\gamma_i)$ and $\frac{\sigma'}{\gamma'} > r$ where $r = \delta_1/\delta_2$ we deduce that $\sigma' \ge \sigma_i$. If $-\gamma' r_{13} + \sigma' c_3 \ge -\gamma_i r_{13} + \sigma_i c_{3}$ then $q_{\gamma',\sigma'}$ is divisible by $q_{\gamma_i,\sigma_i}$. If $-\gamma' r_{13} + \sigma' c_3 < -\gamma_i r_{13} + \sigma_i c_{3}$ then $(\gamma',\sigma')$ is a solution to the system \eqref{eq_ci_2_3}. By our assumption, we must have $\gamma' = \gamma_{i+1}$. Hence $\sigma' \ge \sigma_{i+1}$ and $q_{\gamma',\sigma'}$ is divisible by $q_{\gamma_{i+1},\sigma_{i+1}}$. The conclusion follows.
\end{proof}

\begin{lem}\label{lem_ci_2}
    Assume that $r_{21} = 0$. Let $r_{12},r_{13}$ be such that $c_1 n_1 = r_{12} n_2 + r_{13} n_3$ with $0 < r_{13} < c_3$. Let $g = y^{\eta r_{12} + \epsilon c_2}$ and $h = y^{b/d} - x^{(b-a)/d}z^{a/d}$. Then 
       \begin{equation}
           I^* = (g, h) + ( p_{\alpha,\beta} \mid (\alpha,\beta) \in A) 
           + (q_{\gamma,\sigma} \mid (\gamma,\sigma) \in B).
       \end{equation}
       Furthermore, 
       \begin{enumerate}
           \item If $\beta_h > \epsilon$ then $I^* = (g) + (p_{\alpha,\beta} \mid (\alpha,\beta) \in A) + (q_{\gamma,\sigma} \mid (\gamma,\sigma) \in B)$.
           \item If $\beta_h \le \epsilon$ then $I^* = (h) + (p_{\alpha,\beta} \mid (\alpha,\beta) \in A, \beta < \beta_h) + (q_{\gamma,\sigma} \mid (\gamma,\sigma) \in B)$.
           \item The minimal generators of $I^*$ form a Gr\"obner basis of $I^*$ with respect to the reverse lexicographic order.
           \item $\mu(I^*) \le c_3+2$.
       \end{enumerate}
\end{lem}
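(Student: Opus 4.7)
The plan is to prove the main equation first, then deduce parts (1)--(4).

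For the inclusion $\supseteq$ of the main equation, I will exhibit each listed generator as the initial form $f_\w^*$ for an explicit $\w\in\Lambda$. The binomial $h$ is homogeneous and lies in $I_H$, so $h=h^*\in I^*$. For $g$, the vector $\w=\eta\w_1-\epsilon\w_2$ produces $f_\w$ with a $y$-monomial and either a pure $x$-monomial or an $xz$-monomial, and the inequality $\eta/\epsilon>\delta_2/\delta_1$ (rewritten as $\eta\delta_1>\epsilon\delta_2$) implies the $y$-monomial has strictly smaller total degree; hence $f_\w^*=g$. For each $(\alpha_i,\beta_i)\in A$ I take $\w=\alpha_i\w_1-\beta_i\w_2$; since $\alpha_i/\beta_i>c_3/r_{13}>\delta_2/\delta_1$, both $\alpha_i r_{13}>\beta_i c_3$ and $\alpha_i\delta_1>\beta_i\delta_2$ hold, so the $yz$-monomial $p_{\alpha_i,\beta_i}$ is the initial form. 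The $q$-case is symmetric, using $\w=\gamma_j\w_1-\sigma_j\w_2$ with $\gamma_j/\sigma_j<\delta_2/\delta_1<c_3/r_{13}$. For the reverse inclusion $\subseteq$, Lemma \ref{lem_tangent_cone_space_curve} reduces the task to checking that $f_\w^*$ lies in the claimed ideal for $\w\in\{\w_1,\w_2,a\w_1+b\w_2,a\w_1-b\w_2\}$ with positive coprime $a,b$. The two base cases give $f_{\w_1}^*=p_{1,0}$ and $f_{\w_2}^*=q_{0,1}$, which are handled by Lemmas \ref{lem_ci_2_2} and \ref{lem_ci_2_3} with $\alpha'=1,\beta'=0$ and $\gamma'=0,\sigma'=1$ respectively. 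For the remaining $\w$, I run a case analysis on the signs of the components of $\w$ and the position of $a/b$ relative to $\delta_2/\delta_1$ and $c_3/r_{13}$, finding that in every case the initial form is either a $p_{\alpha',\beta'}$, a $q_{\gamma',\sigma'}$, a multiple of $z^{c_3}$, or a multiple of the homogeneous $h$; Lemmas \ref{lem_ci_2_2} and \ref{lem_ci_2_3} then finish the inclusion.

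For parts (1) and (2), the key computation, obtained by combining the three equations of (\ref{eq_ci_2_1}), is the identity $\alpha_h\delta_1=\beta_h\delta_2$, i.e.\ $\alpha_h/\beta_h=\delta_2/\delta_1$. In part (1), the hypothesis $\beta_h>\epsilon$ together with $\delta_2/\delta_1<\eta/\epsilon$ implies that $y^{b/d}=y^{\alpha_h r_{12}+\beta_h c_2}$ is a multiple of $g$ and simultaneously $x^{(b-a)/d}z^{a/d}$ is a multiple of a suitable $q_{\gamma_j,\sigma_j}$, so $h$ is redundant. In part (2), $\beta_h\le\epsilon$ forces $g$ to equal a multiple of $h$ modulo the monomial generators, and every $p_{\alpha_i,\beta_i}$ with $\beta_i\ge\beta_h$ can be recovered from $h$ together with the surviving $q$-generators. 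Part (3) is Buchberger's criterion: every generator except $h$ is a monomial, so only the $S$-pairs $S(m,h)$ for monomial generators $m$ are nontrivial. Under reverse lexicographic order with $x>y>z$, the leading term of $h$ is $y^{b/d}$ because the other monomial carries positive $z$-degree, and $S(m,h)=\pm(\operatorname{lcm}(m,y^{b/d})/y^{b/d})\cdot x^{(b-a)/d}z^{a/d}$; this monomial is divisible by some $q_{\gamma_j,\sigma_j}$ by a short direct check.

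For part (4), the naive count $\mu(I^*)\le|A|+|B|+2\le r_{13}+c_3+2$ is too weak; the sharper bound $\mu(I^*)\le c_3+2$ requires a combinatorial argument using the fact that the additional constraint $-\gamma\delta_1+\sigma\delta_2>0$ in the definition of $B$ sharply limits $|B|$, and that after the reductions of (1)/(2) the total count of surviving monomial generators is at most $c_3$. I expect the main obstacle to be the exhaustive case analysis in the $\subseteq$ direction of the main equation, particularly the sub-case $\w=a\w_1-b\w_2$ in which the interplay between the signs of $a\delta_1-b\delta_2$ and $ar_{13}-bc_3$ dictates which of the four families of generators the initial form belongs to; all remaining steps are mechanical given Lemmas \ref{lem_ci_2_2}, \ref{lem_ci_2_3} and the identities of (\ref{eq_ci_2_1}).
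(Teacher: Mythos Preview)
Your plan matches the paper's proof in structure and method: the same two inclusions for the main equation, the same case split on $\w=a\w_1\pm b\w_2$ governed by the signs of $a r_{13}-b c_3$ and $a\delta_1-b\delta_2$, the same redundancy reductions for (1)--(2), Buchberger for (3), and a combinatorial bound on $|A|+|B|$ for (4). Two places deserve more care than your sketch suggests.

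In part (1) you say $x^{(b-a)/d}z^{a/d}=q_{\alpha_h,\beta_h}$ is a multiple of some $q_{\gamma_j,\sigma_j}$. Lemma~\ref{lem_ci_2_3} does not apply directly, because its hypothesis is the \emph{strict} inequality $-\gamma'\delta_1+\sigma'\delta_2>0$, whereas $\alpha_h\delta_1=\beta_h\delta_2$. The paper gets around this by writing $\beta_h=k\epsilon+\beta$ and replacing $(\alpha_h,\beta_h)$ by $(\gamma,\sigma)=(\alpha_h-k\eta,\beta_h-k\epsilon)$ (or $k-1$ in place of $k$ when $\beta=0$); then $\gamma/\sigma<\alpha_h/\beta_h=\delta_2/\delta_1$ so Lemma~\ref{lem_ci_2_3} applies to $(\gamma,\sigma)$, and $q_{\alpha_h,\beta_h}$ is divisible by $q_{\gamma,\sigma}$ because $\epsilon c_3-\eta r_{13}\ge 0$.

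In part (2) you say the redundant $p_{\alpha_i,\beta_i}$ with $\beta_i\ge\beta_h$ are recovered from $h$ together with the surviving $q$-generators. The paper instead subtracts a $y$-multiple of $f_{k(-\alpha_h\w_1+\beta_h\w_2)}$ and lands in $(p_{\alpha',\beta'})$ for some $\beta'<\beta_h$, i.e.\ it uses the surviving $p$-generators (via Lemma~\ref{lem_ci_2_2}), not the $q$'s. The same device handles $g$.

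For part (4), your description is vague; the paper's argument is a short three-case split on whether $s_{13}=0$ (where $c_3=kr_{13}+s_{13}$) and whether $\delta_2/\delta_1\le 1$, in each case reading off upper bounds for $|A|$ and $|B|$ from the $z$-exponents of specific early elements such as $p_{k+1,1}$ and $q_{1,1}$.
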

\begin{proof} Recall that $\w_1 = (c_1,-r_{12}, - r_{13})$ and $\w_2 = (0,c_2,-c_3)$. Let 
$$ L = (g, h) + ( p_{\alpha,\beta} \mid (\alpha,\beta) \in A) + (q_{\alpha,\beta} \mid (\alpha, \beta) \in B).$$
For clarity, we divide the proof into several steps.

\vspace{2mm}
\noindent \textbf{Step 1.} $L \subseteq I^*$. Indeed, by the choice of $\eta$ and $\epsilon$ we have $-\eta r_{13} + \epsilon c_3 \ge 0$ and $\eta c_1 + (-\eta r_{13} + \epsilon c_3) > \eta r_{12} + \epsilon c_2$. Hence, $f_{\eta \w_1 - \epsilon \w_2}^* = y^{\eta r_{12} + \epsilon c_2} = g \in I^*$. From the definition of $A$ and $B$, we see the following. For $(\alpha,\beta) \in A$, $f_{\alpha \w_1 - \beta \w_2}^* = y^{\alpha r_{12} + \beta c_2} z^{\alpha r_{13} - \beta c_3} = p_{\alpha,\beta}$. For $(\gamma, \sigma) \in B$, $f_{\gamma \w_1 - \sigma \w_2}^* = x^{\gamma c_1} z^{-\gamma r_{13} + \sigma c_3} = q_{\gamma,\sigma}$. 

\vspace{2mm}
\noindent \textbf{Step 2.} $I^* \subseteq L$. First, note that $(1,0) \in A$ and $(0,1) \in B$ and we have $y^{r_{12}}z^{r_{13}}, z^{c_3} \in I^*$. Now, we prove that for any $\alpha, \beta > 0$, $f_{\alpha \w_1 + \beta \w_2}^* \in (z^{c_3})$. Indeed, we have $\alpha \w_1 + \beta \w_2 = (\alpha c_1, -\alpha r_{12} + \beta c_2, -\alpha r_{13} - \beta c_3)$. Thus, it is either of type $1$ or type $3$, hence $f_{\alpha \w_1 + \beta \w_2}^*$ is divisible by $z^{\alpha r_{13} + \beta c_3}$. 

By Lemma \ref{lem_tangent_cone_space_curve}, it suffices to prove that for any $\alpha, \beta > 0$, $f_\w^* \in L$ where $\w = \alpha \w_1 - \beta \w_2 = (\alpha c_1, -\alpha r_{12} - \beta c_2, -\alpha r_{13} + \beta c_3)$. There are two cases. 

\noindent \textbf{Case 1.} $-\alpha r_{13} + \beta c_3 < 0$. We have $f_\w^* = p_{\alpha,\beta} \in L$ by Lemma \ref{lem_ci_2_2}.

\vspace{1.5mm}

\noindent \textbf{Case 2.} $-\alpha r_{13} + \beta c_3 \ge 0$. There are three subcases. 

Case 2.a. $\alpha \delta_1 > \beta \delta_2$. In particular, $\frac{\delta_2}{\delta_1} < \frac{\alpha}{\beta} \le \frac{c_3}{r_{13}}$. By definition of $\eta, \epsilon$ we have $\beta \ge \epsilon$ and $\alpha \ge \eta$. Hence, $f_\w^* \in (g)$. 

Case 2.b. $\alpha \delta_1 < \beta \delta_2$. Then $f_\w^* = x^{\alpha c_1} z^{-\alpha r_{13} + \beta c_3} = q_{\alpha,\beta} \in L$ by Lemma \ref{lem_ci_2_3}.

Case 2.c. $\alpha \delta_1 = \beta \delta_2$. Then $f_{\w}^* \in (h)$. 

\vspace{2mm}
\noindent \textbf{Step 3.} Assume that $\beta_h > \epsilon$. Then $h \in L = (g) + (p_{\alpha,\beta} \mid (\alpha,\beta) \in A) + (q_{\gamma,\sigma} \mid (\gamma,\sigma) \in B)$. Since $\beta_h > \epsilon$ and $\eta = \varphi_r(\epsilon)$ where $r = \delta_2/\delta_1 = \alpha_h/\beta_h$, we deduce that $\alpha_h \ge \eta$. Thus, it suffices to prove that $q_{\alpha_h,\beta_h} \in L$. Let $\beta_h = k \epsilon + \beta$ for $0 \le \beta < \beta_h$. Then we also have $\alpha_h \ge k \eta$. If $\beta = 0$ then $k > 1$; let $\gamma = \alpha_h - (k-1) \eta$ and $\sigma = \beta_h - (k-1) \epsilon$. If $\beta > 0$, let $\gamma = \alpha_h - k\eta$ and $\sigma = \beta_h - k \epsilon$. Then $\alpha_h / \beta_h > \gamma /\sigma$. Furthermore, 
$$(-\alpha_h r_{13} + \beta_h c_3) - (-\gamma r_{13} + \sigma c_3) = \ell (\epsilon c_3 - \eta r_{13}) \ge 0,$$
where $\ell = k-1$ if $\beta = 0$ and $k$ if $\beta > 0$. Thus, $q_{\alpha_h,\beta_h}$ is divisible by $q_{\gamma,\sigma}$ which belongs to $L$ by Lemma \ref{lem_ci_2_3}. 

\vspace{2mm}
\noindent \textbf{Step 4.} Assume that $\beta_h \le \epsilon$. Let $L = (h) + (p_{\alpha,\beta} \mid (\alpha,\beta) \in A, \beta < \beta_h) + (q_{\gamma,\sigma} \mid (\gamma,\sigma) \in B)$. Then $g$ and $p_{\alpha,\beta}$ with $(\alpha,\beta)\in A$, $\beta \ge \beta_h$ belongs to $L$.

For $g$: Let $\epsilon = k \beta_h + \beta$ where $0 \le \beta < \beta_h$. First, assume that $\beta = 0$. Since $\eta = \varphi_r(\epsilon)$ where $r = \alpha_h/\beta_h$, we deduce that $\eta = k \alpha_h + 1$. Thus,
$$ \epsilon c_{3} = k \beta_h c_3 \ge \eta r_{13} = (k \alpha_h + 1) r_{13}.$$ 
Hence,
$$g - y^{(\eta - k \alpha_h)r_{12}} f_{k(-\alpha_h \w_1 +\beta_h \w_2)} = y^{(\eta - k \alpha_h)r_{12}} x^{k\alpha_h c_1} z^{-k\alpha_h r_{13} + k \beta_h c_3} \in (y^{r_{12}} z^{r_{13}}).$$

Now, assume that $\beta > 0$, let $\alpha = \eta - k \alpha_h$. Then $\alpha/\beta > \alpha_h/\beta_h$. By the choice of $\eta$ and $\epsilon$ we must have $\alpha/\beta > c_3 / r_{13}$. Thus,
$$g - y^{\alpha r_{12} + \beta c_2} f_{k (-\alpha_h \w_1 + \beta_h \w_2)} = x^{k \alpha_h c_1} y^{\alpha r_{12} + \beta c_2} z^{-k \alpha_h r_{13} + k \beta_h c_3}.$$
Furthermore, 
$$(-k \alpha_h r_{13} + k \beta_h c_3) - (\alpha r_{13} - \beta c_3) = \epsilon c_3 - \eta r_{13} \ge 0.$$
Thus $g - y^{\alpha r_{12} + \beta c_2} f_{k (-\alpha_h \w_1 + \beta_h \w_2)}$ is divisible by $p_{\alpha,\beta}$ which belongs to $L$ by Lemma \ref{lem_ci_2_2}.

For $p_{\alpha,\beta}$ with $(\alpha,\beta) \in A$ and $\beta \ge \beta_h$. We proceed similarly to the previous case. Let $\beta = k \beta_h + \beta'$. Since $\frac{\alpha}{\beta} > \frac{\alpha_h}{\beta_h}$, $\alpha > k \alpha_h$. Let $\alpha' = \alpha - k \alpha_h$. There are two cases.  

\noindent \textbf{Case 1.} $\beta' = 0$. We have 
$$p_{\alpha,\beta} - y^{\alpha' r_{12}}z^{\alpha r_{13} - \beta c_3} f_{k (-\alpha_h \w_1 + \beta_h \w_2)} = x^{k \alpha_h c_1} y^{\alpha'r_{12}} z^{\alpha' r_{13}} \in (y^{r_{12}} z^{r_{13}}).$$

\noindent \textbf{Case 2.} $\beta' > 0$. Then $\alpha'/\beta' > \alpha_h/\beta_h$. By the definition of $\epsilon$, we must have $\alpha'/\beta' > c_3/r_{13}.$ Hence, $$p_{\alpha,\beta} - y^{\alpha' r_{12} + \beta' c_2} z^{\alpha r_{13} - \beta c_3}f_{k (-\alpha_h \w_1 + \beta_h \w_2)} = x^{k \alpha_h c_1} y^{\alpha' r_{12} + \beta' c_2} z^{\alpha' r_{13} - \beta' c_3}$$
is divisible by $p_{\alpha',\beta'}$ which belongs to $L$ by Lemma \ref{lem_ci_2_2}.

\vspace{2mm}
\noindent \textbf{Step 5.} Assume that $\beta_h \le \epsilon$. Then $\{h\} \cup \{p_{\alpha,\beta} \mid (\alpha,\beta) \in A, \beta < \beta_h \} \cup \{q_{\gamma,\sigma} \mid (\gamma,\sigma) \in B\}$ form a Gr\"obner basis of $I^*$ with respect to the reverse lexicographic order. 

By Buchberger's criterion \cite[Theorem 15.8]{E}, it suffices to prove that for all $(\alpha, \beta) \in A$ with $\beta < \beta_h$, the $S$-pair of $h$ and $p_{\alpha,\beta}$ reduces to $0$. First, we claim that $\alpha \le \alpha_h$. Assume that $\alpha > \alpha_h$. Since $\alpha = \varphi_r(\beta)$ where $r = \frac{c_3}{r_{13}}$, we deduce that 
$$\frac{\alpha_h}{\beta_h} < \frac{\alpha_h}{ \beta} \le \frac{\alpha - 1}{\beta} < \frac{c_3}{r_{13}}.$$
Thus, $\beta \ge \epsilon$. But that is a contradiction as $\beta < \beta_h \le \epsilon$.

We have 
\begin{equation}
    S (p_{\alpha,\beta}, h) = x^{\alpha_h c_1 } z^{-(\alpha_h - \alpha) r_{13} + (\beta_h -  \beta) c_3}. 
\end{equation}
In particular, $S(p_{\alpha,\beta},h)$ is divisible by $q_{\alpha_h - \alpha,\beta_h-\beta}$. Furthermore, $(\alpha_h - \alpha)/(\beta_h - \beta) < \alpha_h/\beta_h$; thus, $q_{\alpha_h - \alpha, \beta_h -b}$ is divisible by $q_{\gamma_i,\sigma_i}$ for some $(\gamma_i,\sigma_i) \in B$ by Lemma \ref{lem_ci_2_3}. The conclusion follows.

\vspace{2mm}
\noindent \textbf{Step 6.} $\mu(I^*) \le c_3+2$. Let $c_3 = k r_{13} + s_{13}$. If $s_{13} = 0$, then $\alpha r_{13} - \beta c_3$ is divisible by $r_{13}$. Thus, the system \eqref{eq_ci_2_2} has no solution for $i \ge 0$. In other words, $|A| = 1$. Furthermore, by Step 3 and Step 4, $h$ and $g$ cannot be both minimal. Hence, $\mu(I^*) \le 1 + 1 + c_3= c_3 + 2$. 

Now assume that $s_{13} > 0$. If $\frac{\delta_2}{\delta_1} \le 1$, then the system \eqref{eq_ci_2_3} has no solutions. In particular, $|B| = 1$. Thus $\mu(I^*) \le 1 + |A| + 1 \le 1 + r_{13} + 1 \le c_3 + 1$. If $\frac{\delta_2}{\delta_1} > 1$, then $(1,1) \in B$ and $q_{1,1} = x^{c_1} z^{c_1 - r_{13}}$. Hence, $|B| \le c_1 - r_{13} + 1$. Furthermore, $(k+1,1) \in A$, and $p_{k+1,1} = y^{(k+1)r_{12} + c_2} z^{r_{13} - s_{13}}$. Hence $|A| \le r_{13} - s_{13} + 1$. Thus, 
$$ \mu(I^*) \le 1 + |A| + |B| \le 1 + (c_1 - r_{13} + 1)  + (r_{13} -  s_{13} +1) \le c_3 + 2.$$
That concludes the proof of the lemma.
\end{proof}

\begin{exm} Let $H = \langle 332,345,450\rangle$. Then $I_H = (x^{15} - y^4z^8,y^{30}-z^{23})$. We have $\delta_1 = 3$, $\delta_2 = 7$, $c_3 = 23$, $r_{13} = 8$. The smallest $\eta,\epsilon$ such that $\frac{\delta_2}{\delta_1} < \frac{\eta}{\epsilon} \le \frac{c_3}{r_{13}}$ are $5,2$. We have $I_H^* = (z^{23},y^4z^8,y^{42} z, y^{80},x^{15}z^{15},x^{30}z^7)$.  
\end{exm}

\begin{lem}\label{lem_ci_3}
    Assume that $r_{12} = 0$ and $c_2 < r_{21} + r_{23}$. Then $I^* = (y^{c_2},z^{c_3})$.
\end{lem}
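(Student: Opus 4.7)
The plan is to reduce everything to Lemma \ref{lem_tangent_cone_space_curve} and then inspect the two families of relations directly. First I would unpack the hypothesis. The three complete intersection cases singled out at the start of the section are $r_{13}=r_{23}=0$, $r_{21}=r_{31}=0$, and $r_{12}=r_{32}=0$; the first two are handled by Lemmas \ref{lem_ci_1} and \ref{lem_ci_2}, so $r_{12}=0$ places us in the third case, $r_{12}=r_{32}=0$, where Herzog's result gives $\v_1=-\v_3$. Comparing coordinates yields $r_{13}=c_3$ and $c_1=r_{31}$, so $\Lambda$ is generated by
\[ \w_1=(c_1,0,-c_3), \qquad \w_2=(-r_{21},c_2,-r_{23}). \]
The relation $c_1n_1=c_3n_3$ together with $n_1<n_3$ forces $c_1>c_3$, hence $f_{\w_1}^*=z^{c_3}$; and the hypothesis $c_2<r_{21}+r_{23}$ gives $f_{\w_2}^*=y^{c_2}$. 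This already yields $(y^{c_2},z^{c_3})\subseteq I^*$.

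For the reverse inclusion I plan to verify $f_\w^*\in(y^{c_2},z^{c_3})$ for every $\w$ appearing in Lemma \ref{lem_tangent_cone_space_curve}. Consider first the family $\w=a_1\w_1+a_2\w_2=(a_1c_1-a_2r_{21},\,a_2c_2,\,-a_1c_3-a_2r_{23})$ with $a_1,a_2>0$. The middle coordinate is positive and the third is strictly negative, so each of the two monomials of $f_\w=x^{\w_+}-x^{\w_-}$ is divisible either by $y^{a_2c_2}\in(y^{c_2})$ or by $z^{a_1c_3+a_2r_{23}}\in(z^{c_3})$, regardless of the sign of the first coordinate. In particular $f_\w^*$, whether a single monomial or the whole binomial (if the two degrees happen to coincide), belongs to $(y^{c_2},z^{c_3})$. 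For the other family $\w=b_1\w_1-b_2\w_2=(b_1c_1+b_2r_{21},\,-b_2c_2,\,-b_1c_3+b_2r_{23})$ with $b_1,b_2>0$, I would compare degrees: the positive-part degree minus the negative-part degree simplifies to
\[ b_1(c_1-c_3)+b_2(r_{21}+r_{23}-c_2), \]
which is strictly positive by $c_1>c_3$ and the standing hypothesis. So $f_\w^*$ is the negative-side monomial, and this monomial always carries the factor $y^{b_2c_2}\in(y^{c_2})$, irrespective of whether the third coordinate of $\w$ is positive, zero, or negative.

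The only subtlety I expect to encounter is in family $(+)$, where the degree difference $a_1(c_1-c_3)+a_2(c_2-r_{21}-r_{23})$ combines a positive and a negative quantity and can therefore vanish, making $f_\w$ homogeneous and $f_\w^*$ equal to the full binomial; this is precisely why the argument must ensure that both monomials on either side already lie in $(y^{c_2},z^{c_3})$, not merely the one of smaller degree. The remaining work is a straightforward sign analysis of the first coordinate in family $(+)$ and of the third coordinate in family $(-)$, and no additional combinatorial structure (such as the sets $A$, $B$ of the previous lemma) is needed because the target ideal here is already a complete intersection of pure powers.
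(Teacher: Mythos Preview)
Your proposal is correct and follows essentially the same approach as the paper: identify the generators $\w_1=(c_1,0,-c_3)$ and $\w_2=(-r_{21},c_2,-r_{23})$ of $\Lambda$, observe $f_{\w_1}^*=z^{c_3}$ and $f_{\w_2}^*=y^{c_2}$, and then check via Lemma~\ref{lem_tangent_cone_space_curve} that every $f_\w^*$ with $\w=\alpha\w_1\pm\beta\w_2$ lies in $(y^{c_2},z^{c_3})$. Your treatment of the second family is in fact slightly cleaner than the paper's, since you observe that the degree difference $b_1(c_1-c_3)+b_2(r_{21}+r_{23}-c_2)$ is uniformly positive regardless of the sign of the third coordinate, whereas the paper splits into subcases on that sign (and omits the boundary case $-\alpha c_3+\beta r_{23}=0$, which your argument covers automatically).
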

\begin{proof}
    Clearly, by assumption, $y^{c_2}, z^{c_3} \in I^*$. The lattice $\Lambda$ is generated by $\w_1 = (c_1,0,-c_3)$ and $\w_2 = (-r_{21},c_2,-r_{23})$. By Lemma \ref{lem_computation_tangent_cone}, it suffices to prove that $f_\w^* \in J = (y^{c_2},z^{c_3})$ for all $\w \in \Lambda$. By Lemma \ref{lem_tangent_cone_space_curve}, there are two cases. 
    
 \noindent \textbf{Case 1.} $\w = \alpha \w_1 + \beta \w_2 = (\alpha c_1 - \beta r_{21}, \beta c_2, - \alpha c_3 - \beta r_{23})$. Since $y^{\beta c_2}, z^{\alpha c_3 + \beta r_{23}} \in J$, $f_\w^* \in J$. 

\noindent \textbf{Case 2.} $\w = \alpha \w_1 - \beta \w_2 = (\alpha c_1 + \beta r_{21}, -\beta c_2, -\alpha c_3 + \beta r_{23})$. If $-\alpha c_3 + \beta r_{23} < 0$, then $f_\w^*$ is divisible by $y^{c_2}$. If $-\alpha c_3 + \beta r_{23} > 0$ then $\alpha c_1 + \beta r_{21} - \alpha c_3 + \beta r_{23} > \beta c_2$, hence $f_\w^* = y^{\beta c_2} \in J$. The conclusion follows.
\end{proof}
\begin{exm}Let $H = \langle 20,23,30\rangle$. Then $I_H = (x^3 - z^2, y^{10} - x^{10}z)$. Hence, $I_H^* = (y^{10}, z^2).$    
\end{exm}

\begin{lem}\label{lem_ci_4} Assume that $r_{12} = 0$ and $c_2 = r_{21} + r_{23}$. Then $I^* = (y^{c_2} -x^{r_{21}} z^{r_{23}}, z^{c_3})$.    
\end{lem}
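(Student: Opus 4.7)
The plan is to mimic the strategy of Lemma~\ref{lem_ci_3}, but accommodate the essential new feature: because $c_2 = r_{21} + r_{23}$, the binomial $f_{\w_2}$ is now \emph{homogeneous}, so it must appear in $I^*$ as a true binomial rather than as its leading monomial.

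The lattice $\Lambda$ is generated by $\w_1 = (c_1, 0, -c_3)$ (the relation $r_{12}=0$ forces $c_1 n_1 = c_3 n_3$, hence $r_{13}=c_3$) and $\w_2 = (-r_{21}, c_2, -r_{23})$. The hypothesis $c_2 = r_{21}+r_{23}$ makes $f_{\w_2} = y^{c_2} - x^{r_{21}} z^{r_{23}}$ homogeneous, so $f_{\w_2}^* = f_{\w_2}$. From $n_1 < n_3$ and $c_1 n_1 = c_3 n_3$ one has $c_1 > c_3$, so $f_{\w_1}^* = z^{c_3}$. This gives the easy inclusion $L := (y^{c_2} - x^{r_{21}}z^{r_{23}},\, z^{c_3}) \subseteq I^*$.

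For the reverse inclusion I would apply Lemma~\ref{lem_tangent_cone_space_curve} and check that $f_\w^* \in L$ for every $\w = a_1 \w_1 \pm a_2 \w_2$ with $a_1, a_2 > 0$ coprime. In the $+$ case the third coordinate of $\w$ is strictly negative, and a quick degree comparison using $c_1>c_3$ rules out homogeneity and shows that the monomial divisible by $z^{a_1 c_3 + a_2 r_{23}}$ is the one of smaller degree, so it lies in $(z^{c_3})\subseteq L$.

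The $-$ case, where $\w = (a_1 c_1 + a_2 r_{21},\, -a_2 c_2,\, -a_1 c_3 + a_2 r_{23})$, is the crux. In both subcases ($a_1 c_3 \ge a_2 r_{23}$ or $a_1 c_3 < a_2 r_{23}$) the degree gap between the two terms of $f_\w$ equals $a_1(c_1 - c_3) > 0$, so $f_\w^*$ is a monomial involving only $y$ and $z$: respectively $y^{a_2 c_2} z^{a_1 c_3 - a_2 r_{23}}$ or $y^{a_2 c_2}$. Neither is manifestly in $L$; the key step is the identity
\[
y^{a_2 c_2} - x^{a_2 r_{21}} z^{a_2 r_{23}} = (y^{c_2} - x^{r_{21}}z^{r_{23}}) \sum_{j=0}^{a_2-1} y^{(a_2-1-j)c_2} (x^{r_{21}}z^{r_{23}})^j,
\]
which allows one to replace $y^{a_2 c_2}$ by $x^{a_2 r_{21}} z^{a_2 r_{23}}$ modulo $L$. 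After this substitution the initial form becomes $x^{a_2 r_{21}} z^{\max(a_1 c_3,\, a_2 r_{23})}$, whose $z$-exponent is always at least $c_3$, so it belongs to $(z^{c_3})\subseteq L$.

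The main (and only real) obstacle is recognizing that the apparently isolated monomial $y^{a_2 c_2}$ appearing as an initial form in the $-$ case can be absorbed into $L$ only by using the homogeneous generator; once that observation is in hand, the routine bookkeeping above closes the argument.
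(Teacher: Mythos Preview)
Your proof is correct and follows essentially the same approach as the paper: both set $L=(y^{c_2}-x^{r_{21}}z^{r_{23}},z^{c_3})$, dispose of the $+$ case by a degree comparison landing in $(z^{c_3})$, and handle the $-$ case by reducing $y^{a_2 c_2}$ modulo the homogeneous generator to $x^{a_2 r_{21}}z^{a_2 r_{23}}$, after which the $z$-exponent $\max(a_1 c_3,a_2 r_{23})\ge c_3$ finishes the argument. The only cosmetic difference is that you write out the telescoping factorization explicitly, whereas the paper phrases the same reduction as subtracting $f_{\beta\w_2}$ (implicitly using that $f_{\beta\w_2}\in(f_{\w_2})$).
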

\begin{proof}The lattice $\Lambda$ is generated by $\w_1 = (c_1,0,-c_3)$ and $\w_2 = (-r_{21}, c_2, -r_{23})$. First, we have $f_{\w_1}^*$ and $f_{\w_2} = f_{\w_2}^* \in I^*$. Let $J = (y^{c_2} -x^{r_{21}} z^{r_{23}}, z^{c_3}).$ We will now prove that $I^* \subseteq J$. By Lemma \ref{lem_tangent_cone_space_curve}, there are two cases.

\noindent \textbf{Case 1.} $\w = \alpha \w_1 + \beta \w_2 = (\alpha c_1 - \beta r_{21}, \beta c_2, -\alpha c_3 - \beta r_{23})$. There are two subcases. 

Case 1.a. $\alpha c_1 - \beta r_{21} < 0$. Then we have $\beta r_{21} - \alpha c_1 + \alpha c_3 + \beta r_{23} < \beta c_2$. Hence, $f_\w^* = x^{\beta r_{21} - \alpha c_1} z^{\alpha c_3 + \beta r_{23}}  \in (z^{c_3})$. 

Case 1.b. $\alpha c_1 - \beta r_{21} > 0$. Then $f^* = z^{\alpha c_3 + \beta r_{23}} \in (z^{c_3})$.

\noindent \textbf{Case 2.} $\w = \alpha \w_1 - \beta \w_2 = (\alpha c_1 + \beta r_{21}, - \beta c_2, -\alpha c_3 + \beta r_{23}).$ There are two subcases. 

Case 2.a. $-\alpha c_3 + \beta r_{23} \ge  0$. Since $c_2 = r_{21} + r_{23}$, we have $f_\w^* = y^{\beta c_2}$. Let $c_3 = k r_{23} + s_{23}$ with $0 \le s_{23} < r_{23}$. We have 
$$y^{\beta c_2} - f_{\beta \w_2} = x^{\beta r_{21}} z^{\beta r_{23}}.$$
Since $\beta r_{23} \ge \alpha c_3 \ge c_3$, we deduce that $f_w^* \in J$.

Case 2.b. $-\alpha c_3 + \beta r_{23} < 0$. Then $f_\w^* = y^{\beta c_2} z^{\alpha c_3 - \beta r_{23}}$. We have
$$f_\w^* - f_{\beta \w_2} = x^{\beta r_{21}} z^{\alpha c_3} \in (z^{c_3}).$$
The conclusion follows.    
\end{proof}
\begin{exm}
    This is the typical case of shifted semigroup which is a complete intersection with large enough shift. For example, consider the example in the previous case shifted by $140$, namely $H = \langle 160,163,170 \rangle$. Then $I = (x^{17} - z^{16}, y^{10} - x^7z^3)$ and $I^* = (z^{16},y^{10} - x^7z^3)$. 
\end{exm}

Finally, consider the case $r_{12} = r_{32} = 0$ and $c_2 > r_{21} + r_{23}$. From the previous cases, we may assume that $r_{21}, r_{23} > 0$. If $r_{23} \ge c_3$, then we can replace $r_{21}$ by $r_{21} + c_1$ and $r_{23}$ by $r_{23} - c_3$. Thus, we may assume that $0 < r_{23} < c_3$. Then, the lattice $\Lambda$ is generated by $\w_1 = (c_1,0,-c_3)$ and $\w_2 = (-r_{21}, c_2, -r_{23})$. We fix the following notation. Let $\delta_1 = c_1 - c_3$ and $\delta_2 = c_2 - r_{21} - r_{23}$. Since $\w = (-(b-a)/d,b/d,-a/d) \in \Lambda$, there exist $\alpha$ and $\beta$ such that $\w = \alpha \w_1 + \beta \w_2$. In particular, $\beta c_2 = b/d$ and $\alpha \delta_1 + \beta \delta_2 = 0$. Denote $\alpha_h = -\alpha$ and $\beta_h = \beta$. Then we have 
\begin{equation}\label{eq_ci_5_1}
    \begin{cases}
    \alpha_h c_1 + \beta_h r_{21} &= (b-a)/d\\
    \beta_h c_2 &= b/d \\
    -\alpha_h c_3 + \beta_h r_{23} & = a/d.
    \end{cases}
\end{equation}
First, we claim

\begin{lem}\label{lem_ci_5_1} Assume that $r_{12} = 0$ and $c_2 > r_{21} + r_{23}$. Then $\frac{\delta_2}{\delta_1} < \frac{r_{23}}{c_{3}}$.   
\end{lem}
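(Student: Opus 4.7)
The plan is to follow the same strategy as Lemma \ref{lem_ci_2_1}: extract from the system \eqref{eq_ci_5_1} both an identity of the form $\alpha_h\delta_1=\beta_h\delta_2$ and a strict inequality relating $\alpha_h/\beta_h$ to $r_{23}/c_3$, then chain them.

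First I would add the first and third equations of \eqref{eq_ci_5_1}, obtaining $\alpha_h(c_1-c_3)+\beta_h(r_{21}+r_{23})=(b-a)/d+a/d=b/d=\beta_h c_2$, which rearranges to $\alpha_h\delta_1=\beta_h\delta_2$ (this is the analogue of the identity $\alpha\delta_1+\beta\delta_2=0$ noted in the text preceding the lemma). Since $\beta_h c_2=b/d>0$ we have $\beta_h>0$, and $\delta_2>0$ by the standing hypothesis $c_2>r_{21}+r_{23}$. The only nontrivial point is checking $\alpha_h>0$ (and hence $\delta_1>0$), so that the identity can be turned into a ratio with no sign flip. For this I would solve for $\alpha_h c_3$ from the third equation, getting $\alpha_h c_3=(br_{23}-ac_2)/(dc_2)$, and then substitute $n_2=n_1+a$ and $n_3=n_1+b$ into the defining relation $c_2 n_2=r_{21}n_1+r_{23}n_3$, which gives $br_{23}-ac_2=n_1\delta_2>0$. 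Hence $\alpha_h>0$, and then $\delta_1>0$ follows from $\alpha_h\delta_1=\beta_h\delta_2$.

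With the signs established we may divide to write $\delta_2/\delta_1=\alpha_h/\beta_h$. Finally, the third equation of \eqref{eq_ci_5_1} reads $\beta_h r_{23}-\alpha_h c_3=a/d>0$, i.e., $\alpha_h/\beta_h<r_{23}/c_3$. Combining the two yields $\delta_2/\delta_1<r_{23}/c_3$, as required. The only mild obstacle is the sign check for $\alpha_h$ and $\delta_1$; once that is in hand the argument is a direct transcription of the proof of Lemma \ref{lem_ci_2_1}.
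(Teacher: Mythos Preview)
Your proposal is correct and follows the same approach as the paper: both combine the identity $\alpha_h\delta_1=\beta_h\delta_2$ (already recorded in the text before the lemma) with the third equation of \eqref{eq_ci_5_1} to obtain $\delta_2/\delta_1=\alpha_h/\beta_h<r_{23}/c_3$. Your added sign check for $\alpha_h$ and $\delta_1$ is sound, though a shorter route is to observe that in this case $r_{12}=r_{32}=0$ gives $c_1 n_1=c_3 n_3$, so $\delta_1=c_1-c_3>0$ directly, and then $\alpha_h>0$ follows from $\alpha_h\delta_1=\beta_h\delta_2>0$.
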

\begin{proof} By \eqref{eq_ci_5_1}, $\beta_h r_{23} - \alpha_h c_3 = a/d > 0$. Hence,
$$\frac{\delta_2}{\delta_1} = \frac{\alpha_h}{\beta_h} < \frac{r_{23}}{c_3}.$$ 
The conclusion follows.    
\end{proof}
Let $\frac{\eta}{\epsilon} = M(\frac{\delta_2}{\delta_1}, \frac{r_{23}}{r_3})$, i.e., $\eta$ and $\epsilon$ are the smallest positive integers such that $\frac{\delta_2}{\delta_1} < \frac{\eta}{\epsilon} \le \frac{r_{23}}{r_3}.$ We now define the following two sets.

Let $\alpha_0 = 1, \beta_0 =0$. For each $i \ge 0$, consider the system of inequalities 
\begin{equation} \label{eq_ci_5_2}
\begin{cases}
     \alpha c_3 - \beta r_{23} & > 0 \\
    \alpha c_{3} - \beta r_{23} & < \alpha_i c_{3} - \beta_i r_{23} \\
    \beta &> \beta_i.
\end{cases}
\end{equation}
When the system \eqref{eq_ci_5_2} has solutions, let $\beta_{i+1}$ be the smallest solution among $\beta$. Then $\alpha_{i+1} = \varphi_r(\beta_{i+1})$ where $r = r_{23}/c_3$. There can be only finitely many such $\beta_i$ as the sequence $\{ \alpha_i c_3 - \beta_i r_{23} \}$ is a strictly decreasing sequence of positive integers. Let $t$ be the largest index of $i$ such that $\beta_t < \epsilon$ and $A = \{(\alpha_i,\beta_i) \mid i = 0, \ldots, t\}$. Since $\alpha_0 c_3 -\beta_0 r_{23} = c_3$, $|A| \le c_3$. For each $\alpha,\beta \in \NN$ such that $\alpha c_3 - \beta r_{23} \ge 0$, let $p_{\alpha,\beta} = y^{\beta c_2} z^{\alpha c_3 - \beta r_{23}}$. We have 

\begin{lem}\label{lem_ci_5_2} Assume that $\alpha' > 0$ and $\beta' \ge 0$ be such that $\alpha' c_{3} - \beta' r_{23} \ge 0$. Let $g = y^{\epsilon c_2}$. Then $p_{\alpha',\beta'} \in L = (g,p_{\alpha,\beta} \mid (\alpha,\beta) \in A)$.    
\end{lem}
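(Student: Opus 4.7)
The plan is to follow the same three-case analysis used in the proof of Lemma \ref{lem_ci_2_2}, with the ratio $c_3/r_{13}$ there replaced by $r = r_{23}/c_3$ here, and with the $y$-exponent of $p_{\alpha,\beta}$ now being simply $\beta c_2$ rather than $\alpha r_{12} + \beta c_2$. The cases are split by the size of $\beta'$ relative to the threshold $\epsilon$, noting that $(\alpha_0,\beta_0) = (1,0)\in A$ and $p_{1,0} = z^{c_3}$.

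If $\beta' = 0$, then $p_{\alpha',0} = z^{\alpha' c_3}$ is divisible by $p_{1,0} = z^{c_3} \in L$. If $\beta' \ge \epsilon$, the $y$-exponent $\beta' c_2$ is at least $\epsilon c_2$, so $p_{\alpha',\beta'}$ is already divisible by $g$.

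The main case is $0 < \beta' < \epsilon$. First I would rule out the boundary possibility $\alpha' c_3 = \beta' r_{23}$: in that case $\alpha'/\beta' = r_{23}/c_3$, which by Lemma \ref{lem_ci_5_1} is strictly larger than $\delta_2/\delta_1$, so $(\alpha',\beta')$ is a candidate pair for the minimizer defining $(\eta,\epsilon) = M(\delta_2/\delta_1, r_{23}/c_3)$, forcing $\epsilon \le \beta'$ and contradicting $\beta' < \epsilon$. Hence $\alpha' c_3 - \beta' r_{23} > 0$. Now let $i$ be the largest index with $\beta_i < \beta'$. Since $\alpha'/\beta' \ge r$ together with $\beta' > \beta_i$ give $\alpha' > r\beta_i$, and $\alpha_i = \varphi_r(\beta_i)$ is by definition the smallest integer strictly larger than $r\beta_i$, we deduce $\alpha' \ge \alpha_i$. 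If $\alpha' c_3 - \beta' r_{23} \ge \alpha_i c_3 - \beta_i r_{23}$, then $p_{\alpha',\beta'}$ is divisible by $p_{\alpha_i,\beta_i} \in L$. Otherwise, $(\alpha',\beta')$ satisfies the system \eqref{eq_ci_5_2} for the index $i$, so minimality of $\beta_{i+1}$ combined with maximality of $i$ pins $\beta' = \beta_{i+1}$; then $\alpha' \ge \varphi_r(\beta_{i+1}) = \alpha_{i+1}$ implies $p_{\alpha',\beta'}$ is divisible by $p_{\alpha_{i+1},\beta_{i+1}}$, and $\beta_{i+1} = \beta' < \epsilon$ ensures $(\alpha_{i+1},\beta_{i+1}) \in A$.

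The only step requiring genuine care—and therefore the one I would double-check—is the exclusion of the boundary case $\alpha' c_3 = \beta' r_{23}$ under the assumption $\beta' < \epsilon$, since this is where the minimality clause in the definition of $(\eta,\epsilon)$ enters essentially. The remainder is a routine bookkeeping parallel to Lemma \ref{lem_ci_2_2}.
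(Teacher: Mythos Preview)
Your proposal is correct and follows exactly the approach the paper intends: the paper's own proof simply reads ``The proof is similar to that of Lemma~\ref{lem_ci_2_2},'' and your three-case analysis (with $r=r_{23}/c_3$ and the simplified $y$-exponent $\beta c_2$) is precisely that translation. Your explicit exclusion of the boundary case $\alpha' c_3 = \beta' r_{23}$ when $0<\beta'<\epsilon$ via the minimality of $\epsilon$ is in fact a small improvement over the paper's write-up of Lemma~\ref{lem_ci_2_2}, where this point is left implicit.
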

\begin{proof}
    The proof is similar to that of Lemma \ref{lem_ci_2_2}.
\end{proof}

Let $\gamma_0 = 0$, $\sigma_0 = 1$. For each $i \ge 0$, consider the system of inequalities 
\begin{equation} \label{eq_ci_5_3}
\begin{cases}
     -\gamma \delta_1 + \sigma \delta_2 & > 0 \\
    -\gamma c_{3} + \sigma r_{23} & < \gamma_i c_{3} + \sigma_i r_{23} \\
    \gamma &> \gamma_i.
\end{cases}
\end{equation}
When the system \eqref{eq_ci_5_3} has solutions, let $\gamma_{i+1}$ be the smallest solution among $\gamma$. Then $\sigma_{i+1} = \varphi_r ( \gamma_{i+1})$ where $r = \delta_1 / \delta_2$. There can be only finitely many such $\gamma_i$ as the sequence $\{ -\gamma_i c_3 + \sigma_i r_{23} \}$ is a strictly decreasing sequence of positive integers. Let $u$ be the largest index of $i$ and $B = \{(\gamma_i,\sigma_i) \mid i = 0, \ldots, u\}$. Since $-\gamma_0 c_3 + \sigma_0 r_{23} = r_{23}$, $|B| \le r_{23}$. For each $\gamma,\sigma \ge 0$ such that $ -\gamma \delta_1 + \sigma \delta_2 > 0$, let $q_{\gamma,\delta} = x^{\gamma c_1 + \sigma r_{21}} z^{-\gamma c_3 + \sigma r_{23}}$. We have 

\begin{lem}\label{lem_ci_5_3} Assume that $\gamma',\sigma'$ be such that $-\gamma' \delta_1 + \sigma' \delta_2 > 0$. Then $q_{\gamma',\sigma'} \in L = (q_{\gamma,\sigma} \mid (\gamma,\sigma) \in B)$.     
\end{lem}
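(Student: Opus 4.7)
The plan is to mirror the argument of Lemma \ref{lem_ci_2_3} almost verbatim, since the combinatorial structure of the set $B$ and the system \eqref{eq_ci_5_3} is completely parallel to that used in the type-1 case: one runs an induction along the index $i$, using $\sigma_i = \varphi_r(\gamma_i)$ with $r = \delta_1/\delta_2$ to force $\sigma' \ge \sigma_i$, and then argues by a dichotomy based on whether $(\gamma',\sigma')$ itself satisfies the inequality system.

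First, I would dispatch the trivial base case: if $\gamma' = 0$, then the hypothesis $-\gamma'\delta_1 + \sigma'\delta_2 > 0$ forces $\sigma' \ge 1$, so $q_{\gamma',\sigma'} = x^{\sigma' r_{21}} z^{\sigma' r_{23}}$ is divisible by $q_{0,1} = x^{r_{21}} z^{r_{23}}$, which lies in $L$. Assume now $\gamma' \ge 1$, and let $i$ be the largest index in $\{0,\ldots,u\}$ with $\gamma_i < \gamma'$. From $-\gamma'\delta_1 + \sigma'\delta_2 > 0$ we get $\sigma'/\gamma' > \delta_1/\delta_2 = r$, and since $\sigma_i = \varphi_r(\gamma_i)$ with $\gamma_i < \gamma'$, an elementary check on $\varphi_r$ yields $\sigma' \ge \sigma_i$.

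Next I would split on the sign of $(-\gamma' c_3 + \sigma' r_{23}) - (-\gamma_i c_3 + \sigma_i r_{23})$. If this quantity is $\ge 0$, then combining $\gamma' > \gamma_i$, $\sigma' \ge \sigma_i$, and the exponent comparison on $z$, one sees that $\gamma' c_1 + \sigma' r_{21} \ge \gamma_i c_1 + \sigma_i r_{21}$ as well, so $q_{\gamma',\sigma'}$ is divisible by $q_{\gamma_i,\sigma_i}\in L$. If instead this quantity is $< 0$, then $(\gamma',\sigma')$ itself satisfies every inequality of the system \eqref{eq_ci_5_3} attached to $i$; by maximality of $u$ this forces $i < u$, and by minimality of $\gamma_{i+1}$ among $\gamma$-solutions we must have $\gamma' \ge \gamma_{i+1}$, which combined with $\gamma' \le \gamma_{i+1}$ (by maximality of $i$ with $\gamma_i < \gamma'$) gives $\gamma' = \gamma_{i+1}$. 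Then $\sigma'/\gamma_{i+1} > r$ together with $\sigma_{i+1} = \varphi_r(\gamma_{i+1})$ yields $\sigma' \ge \sigma_{i+1}$, and $q_{\gamma',\sigma'}$ is divisible by $q_{\gamma_{i+1},\sigma_{i+1}}\in L$.

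There is no real obstacle: the only point that requires care is the monotonicity lemma $\gamma_i < \gamma'$ and $\sigma'/\gamma' > r$ implies $\sigma' \ge \varphi_r(\gamma_i) = \sigma_i$. This uses the defining property of $\varphi_r$: if $rt\notin\NN$ then $\varphi_r(t) = \lceil rt\rceil$ is the least integer strictly greater than $rt$, while if $rt\in\NN$ then $\varphi_r(t) = rt+1$ still serves as the threshold beyond which a fraction $p/q$ with $p/q > r$ and $q\ge t$ must land; in either case the inequality $\sigma' \ge \sigma_i$ follows directly. With that in hand, the above dichotomy completes the argument.
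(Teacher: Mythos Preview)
Your proposal is correct and follows exactly the approach the paper intends: the paper's own proof of Lemma~\ref{lem_ci_5_3} simply reads ``The proof is similar to that of Lemma~\ref{lem_ci_2_3},'' and you have written out that parallel argument in full, with the same base case $\gamma'=0$, the same choice of the largest index $i$ with $\gamma_i<\gamma'$, the same use of $\sigma_i=\varphi_r(\gamma_i)$ to force $\sigma'\ge\sigma_i$, and the same dichotomy on the $z$-exponent leading either to divisibility by $q_{\gamma_i,\sigma_i}$ or to $\gamma'=\gamma_{i+1}$ and divisibility by $q_{\gamma_{i+1},\sigma_{i+1}}$.
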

\begin{proof}
    The proof is similar to that of Lemma \ref{lem_ci_2_3}.
\end{proof}

\begin{lem}\label{lem_ci_5} Assume that $r_{12} = 0$. Let $r_{21}, r_{23}$ with $0 < r_{23} < c_3$ be such that $c_2 n_2 = r_{21} n_1 + r_{23} n_3$. Assume that $c_2 > r_{21} + r_{23}$. Let $g = y^{\epsilon c_2}$ and $h = y^{b/d} - x^{(b-a)/d}z^{a/d}$. Then
\begin{equation}
    I^* = (g,h) + (p_{\alpha,\beta} \mid (\alpha,\beta) \in A) + (q_{\gamma,\sigma} \mid (\gamma,\sigma) \in B).
\end{equation}
Furthermore, 
\begin{enumerate}
    \item If $\beta_h > \epsilon$, then $I^* = (g) + (p_{\alpha,\beta} \mid (\alpha,\beta) \in A) + (q_{\gamma,\sigma}\mid (\gamma,\sigma) \in B)$.
    \item If $\beta_h \le \epsilon$ then $I^* = (h) + (p_{\alpha,\beta} \mid (\alpha,\beta) \in A, \beta < \beta_h) + (q_{\gamma,\sigma} \mid (\gamma,\sigma) \in B)$.     
    \item The minimal generators of $I^*$ form a Gr\"obner basis of $I^*$ with respect to the reverse lexicographic order.
    \item $\mu(I^*) \le c_3 + 2$.
\end{enumerate}
\end{lem}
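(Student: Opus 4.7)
The plan is to follow the six-step template of Lemma \ref{lem_ci_2}, with the substitutions $r_{13} \leftrightarrow c_3$ and $c_3 \leftrightarrow r_{23}$ in the second and third coordinates, and noting that $g$ simplifies to $y^{\epsilon c_2}$ because $r_{12} = 0$. First, Step 1 ($L \subseteq I^*$) asks me to exhibit each proposed generator as an initial form $f^*_\v$: $g$ comes from $\eta \w_1 - \epsilon \w_2$ (using $\eta \delta_1 > \epsilon \delta_2$ to see that $y^{\epsilon c_2}$ is the lower-degree term), $p_{\alpha,\beta}$ from $\alpha \w_1 - \beta \w_2$ in the region $-\alpha c_3 + \beta r_{23} < 0$ and $\alpha \delta_1 > \beta \delta_2$, $q_{\gamma,\sigma}$ from $\gamma \w_1 - \sigma \w_2$ in the region $-\gamma c_3 + \sigma r_{23} > 0$ and $\gamma \delta_1 < \sigma \delta_2$, and $h \in I^*$ by Lemma \ref{lem_homogeneous}. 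Step 2 ($I^* \subseteq L$) invokes Lemma \ref{lem_tangent_cone_space_curve}: for $\w = \alpha \w_1 + \beta \w_2$ the initial form is always divisible by $z^{c_3} = q_{0,1}$; for $\w = \alpha \w_1 - \beta \w_2$ I split by the sign of $-\alpha c_3 + \beta r_{23}$ and the comparison of $\alpha \delta_1$ with $\beta \delta_2$, dispatching to Lemma \ref{lem_ci_5_2}, Lemma \ref{lem_ci_5_3}, the ideal $(g)$ (in the range $\delta_2/\delta_1 < \alpha/\beta \le r_{23}/c_3$ which forces $\beta \ge \epsilon$), or the ideal $(h)$ (when $\alpha \delta_1 = \beta \delta_2$).

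Parts (1) and (2) (Steps 3 and 4) duplicate the reduction arguments of Lemma \ref{lem_ci_2} almost verbatim. When $\beta_h > \epsilon$, subtracting a suitable multiple of $f_{k(-\alpha_h \w_1 + \beta_h \w_2)}$ from $h$ reduces it modulo the other generators to a monomial divisible by some $q_{\gamma, \sigma}$, placing $h$ in the smaller ideal by Lemma \ref{lem_ci_5_3}. When $\beta_h \le \epsilon$, the same style of division, this time writing $\epsilon = k\beta_h + \beta'$ and splitting on $\beta' = 0$ versus $\beta' > 0$, lets me rewrite $g$ as a multiple of $p_{1,0} = z^{c_3}$ (when $\beta' = 0$) or of some $p_{\alpha',\beta'}$ with $\alpha'/\beta' > r_{23}/c_3$ (when $\beta' > 0$), and the analogous reduction absorbs every $p_{\alpha,\beta}$ with $\beta \ge \beta_h$ into $(h)$ plus the retained $p$'s and $q$'s.

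For part (3) (Step 5), Buchberger's criterion with respect to the reverse lexicographic order reduces to the $S$-pairs of $h$ against $p_{\alpha,\beta}$ with $\beta < \beta_h$. I first verify $\alpha \le \alpha_h$: if $\alpha > \alpha_h$, then the chain $\delta_2/\delta_1 < \alpha_h/\beta \le (\alpha-1)/\beta < r_{23}/c_3$ would force $\beta \ge \epsilon > \beta_h$, a contradiction. The $S$-pair is then manifestly divisible by $q_{\alpha_h - \alpha,\, \beta_h - \beta}$, which belongs to $L$ by Lemma \ref{lem_ci_5_3}. For part (4), I split cases on whether $r_{23} \mid c_3$ (in which case the system \eqref{eq_ci_5_2} has no solution and $|A| = 1$) and on the sign of $\delta_2 - \delta_1$; in each sub-case one tightens $|A| + |B|$ to at most $c_3 + 1$, and combined with the fact from (1) and (2) that $g$ and $h$ cannot both be minimal, this gives $\mu(I^*) \le c_3 + 2$. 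The main obstacle is the combinatorial bookkeeping in Step 4 — ensuring the residues after each subtraction land in the precisely claimed sub-ideal, and that the ratio $\alpha'/\beta'$ exceeds $r_{23}/c_3$ strictly whenever Lemma \ref{lem_ci_5_2} is applied — but this is entirely parallel to the treatment in Lemma \ref{lem_ci_2} with the ratios $\delta_2/\delta_1$, $\eta/\epsilon$, and $r_{23}/c_3$ playing their analogous roles.
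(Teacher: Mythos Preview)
Your outline is essentially the paper's own proof, and Steps 1--5 are correct with the same structure. Two slips are worth flagging. First, in Step 2 you write $z^{c_3} = q_{0,1}$, but with the definitions here $q_{0,1} = x^{r_{21}} z^{r_{23}}$; the monomial $z^{c_3}$ is $p_{1,0}$. This is harmless since $(1,0)\in A$, so the argument goes through once relabeled. Second, in Step 5 the inequality chain should read $(\alpha-1)/\beta \le r_{23}/c_3$ (not strict) and the contradiction is $\beta \ge \epsilon \ge \beta_h > \beta$; again cosmetic.

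The real issue is Step 6, where the parallel with Lemma \ref{lem_ci_2} breaks down. You claim that $r_{23}\mid c_3$ forces system \eqref{eq_ci_5_2} to have no solution and hence $|A|=1$, but this is false: for $c_3 = k r_{23}$ with $k\ge 2$, the pair $(\alpha,\beta)=(1,1)$ solves \eqref{eq_ci_5_2} at $i=0$. What $r_{23}\mid c_3$ actually kills is system \eqref{eq_ci_5_3}, giving $|B|=1$ (since $-\gamma c_3 + \sigma r_{23}$ is then a positive multiple of $r_{23}$ strictly below $r_{23}$, impossible). Likewise, the $\delta_2/\delta_1 \le 1$ branch from Lemma \ref{lem_ci_2} does not transplant: here it does not force $|B|=1$. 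The paper instead argues directly when $s>0$ (writing $c_3 = kr_{23}+s$) that $(1,i)\in A$ for $i=0,\ldots,k$ with $p_{1,k}=y^{kc_2}z^s$, so $|A|\le k+s$, and combines this with $|B|\le r_{23}$ to get $\mu(I^*)\le 1+(k+s)+r_{23}\le c_3+2$ via $(k-1)(r_{23}-1)\ge 0$. Your bound is recoverable, but not by the case split you wrote.
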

\begin{proof}The lattice $\Lambda$ is generated by $\w_1 = (c_1,0,-c_3)$ and $\w_2 = (-r_{21}, c_2, -r_{23})$. Let 
$$L = (g,h) + (p_{\alpha,\beta} \mid (\alpha,\beta) \in A) + (q_{\gamma,\sigma} \mid (\gamma,\sigma) \in B).$$
For clarity, we divide the proof into several steps. The first two steps can be proved in similar manners as those of Lemma \ref{lem_ci_2}.

\noindent \textbf{Step 1.} $L \subseteq I^*$. 

\vspace{2mm}

\noindent \textbf{Step 2.} $I^* \subseteq L$.

\vspace{2mm}

\noindent \textbf{Step 3.} Assume that $\beta_h > \epsilon$. Then $h \in L = (g) + (p_{\alpha,\beta} \mid (\alpha,\beta) \in A) + (q_{\gamma,\sigma} \mid (\gamma,\sigma) \in B).$

Since $\beta_h > \epsilon$ and $g = y^{\epsilon c_2}$, it suffices to prove that $q_{\alpha_h,\beta_h} \in L$. Let $\beta_h = k \epsilon + \beta$ for $0 \le \beta < \beta_h$. If $\beta = 0$, $k \ge 2$, let $\gamma = \alpha_h - (k-1) \eta$ and $\sigma = \beta_h - (k-1) \epsilon$. If $\beta > 0$, let $\gamma = \alpha_h - k \eta$ and $\sigma = \beta_h - k \epsilon$. Then we have $\frac{\alpha_h}{\beta_h} > \frac{\gamma}{\sigma}$. Furthermore, 
$$ (-\alpha_h c_3 + \beta_h r_{23}) - (-\gamma c_3 + \sigma r_{23}) = k (-\eta c_3 + \epsilon r_{23}) \ge 0$$
Hence, $q_{\alpha,\beta}$ is divisible by $q_{\gamma,\sigma}$ which belongs to $L$ by Lemma \ref{lem_ci_5_3}.

\vspace{2mm}

\noindent \textbf{Step 4.} Assume that $\beta_h \le \epsilon$. Let $L = (h) + (p_{\alpha,\beta} \mid (\alpha,\beta) \in A, \beta < \beta_h) + (q_{\gamma,\sigma} \mid (\gamma,\sigma) \in B)$. Then $g$ and $p_{\alpha,\beta}$ with $(\alpha,\beta) \in A$ and $\beta \ge \beta_h$ belongs to $M$.

For $g$: Let $\epsilon = k \beta_h + \beta$ where $0 \le \beta < \beta_h$. If $\beta = 0$, then we have $\eta = k \alpha_h + 1$. Furthermore, $\frac{\eta}{\epsilon} \le \frac{r_{23}}{c_3}$. Hence, $ k \beta_h r_{23} \ge (k \alpha_h + 1) c_3$. Thus 
$$g - f_{-k \alpha_h \w_1 + k \beta_h \w_2} = x^{k \alpha_h c_1 + k \beta_h r_{21}} z^{-k \alpha_h c_3 + k \beta_h r_{23}} \in (z^{c_3}).$$
Now assume that $\beta > 0$. Then we also have $\eta \ge k \alpha_h + 1$. Let $\alpha = \eta - k \alpha_h$ and $\beta = \epsilon - k \beta_h$. Then, $\alpha/\beta > \alpha_h/\beta_h$. By definition of $\epsilon$ we must have $\alpha/\beta > r_{23}/c_3$. Now, we have 
$$g - y^{\beta c_2} f_{-k \alpha_h \w_1 + k \beta_h \w_2} = x^{k\alpha_h c_1 + k \beta_h r_{21}} y^{\beta c_2} z^{-k \alpha_h c_3 + k \beta_h r_{23}}.$$
Since $(-k \alpha_h c_3 + k \beta_h r_{23}) - (\alpha c_3 - \beta r_{23}) = -\eta c_3 + \epsilon r_{23} \ge 0$, $g - y^{\beta c_2} f_{-k \alpha_h \w_1 + k \beta_h \w_2}$ is divisible by $p_{\alpha,\beta}$ which belongs to $L$ by Lemma \ref{lem_ci_5_2}.

For $p_{\alpha,\beta}$ with $(\alpha,\beta) \in A$ and $\beta \ge \beta_h$. As before, we write $\beta = k \beta_h + \beta'$. Then we must have $\alpha \ge k \beta_h + 1$. Let $\alpha = k \alpha_h + \alpha'$. If $\beta' = 0$, we deduce that $p_{\alpha,\beta} - f_{-k \alpha_h\w_1 + k \beta_h \w_2} \in M$ as in the previous case. If $\beta' > 0$, then we also have $\frac{\alpha'}{\beta'} > \frac{r_{23}}{c_3}$. Hence, $p_{\alpha,\beta} - f_{-k \alpha_h \w_1 + k \beta_h \w_2} \in (p_{\alpha',\beta'})$.

\vspace{2mm}

\noindent \textbf{Step 5.} Assume that $\beta_h \le \epsilon$. Then $\{ h \} \cup \{p_{\alpha,\beta} \mid (\alpha,\beta) \in A, \beta < \beta_h \} \cup \{q_{\gamma,\sigma} \mid (\gamma,\sigma) \in B\}$ form a Gr\"obner basis of $I^*$ with respect to the revlex order. 

Since $\operatorname{in}(h) = y^{\beta_h c_2}$, it suffices to prove that for any $(\alpha,\beta) \in A, \beta < \beta_h$, the S-pair $S(h,p_{\alpha,\beta})$ reduces to $0$. We have 
$$S(h,p_{\alpha,\beta}) = x^{\alpha_h c_1 + \beta_h r_{21}} z^{(\alpha - \alpha_h) c_3 + (\beta_h - \beta) r_{23}}.$$
If $\alpha \ge \alpha_h$ then we have the exponent of $z$ is at least $r_{23}$, hence $S(h,p_{\alpha,\beta}) \in (q_{0,1})$. Thus, we may assume that $\alpha < \alpha_h$. Since $\frac{\delta_2}{\delta_1} < \frac{r_{23}}{c_3} \le \frac{\alpha}{\beta}$, we deduce that $\frac{\alpha_h - \alpha}{\beta_h - \beta} < \frac{\delta_2}{\delta_1}$. Hence $S(h,p_{\alpha,\beta})$ is divisible by $q_{\alpha_h - \alpha, \beta_h - \beta}$ which belongs to $L = (q_{\gamma,\sigma} \mid (\gamma,\sigma) \in B)$. The conclusion follows.

\vspace{2mm}

\noindent \textbf{Step 6.} $\mu(I^*) \le c_3 + 2$. Let $c_3 = k r_{23} + s$ with $0 \le s < r_{23}$. If $s = 0$, then $-\gamma c_3 + \sigma r_{23}$ is divisible by $r_{23}$. Hence the system \eqref{eq_ci_5_3} has no solution for $i \ge 0$. Hence $|B| = 1$. Thus, $\mu(I^*) \le 1 + 1 + |A| \le c_3 + 2$.

Now assume that $s > 0$. Then $(1,i) \in A$ for $i = 0, \ldots, k$ and $p_{1,k} = y^{kc_2}z^s$. Thus $|A| \le k + s$. Furthermore, $|B| \le r_{23}$. Thus, 
$$\mu(I^*) \le 1 + k + s + r_{23} \le c_3 + 2.$$
That concludes the proof of the lemma.
\end{proof}

\begin{exm}
   Let H =  $\langle 480, 503, 1950 \rangle$. Then $I = (y^{30} - x^3z^7,x^{65} - z^{16})$ and $I^* = (x^3z^7,z^{16},y^{30}z^9,y^{60}z^2,x^{74}z^5,x^{145}z^3,y^{210}).$
\end{exm}

\begin{exm}
   Let $H = \langle 160,169,460 \rangle$. Then $I = (y^{20} - xz^7,x^{23} - z^8)$ and $I^* = (xz^7,z^8,y^{20}z,x^{25}z^6,x^{49}z^5,x^{73}z^4,y^{100} - x^{97}z^3)$.
\end{exm}

\section{Non complete intersection monomial space curves}\label{sec_non_ci}

We keep the notation as in Section \ref{sec_pre}. In this section, we consider the case where $I_H$ is not a complete intersection. In particular, $r_{ij} \neq 0$ for all $i,j$. Then, the lattice $\Lambda$ is generated by $\w_1 = (c_1,-r_{12},-r_{13})$ and $\w_2 = (-r_{21},c_2,-r_{23})$. 

\begin{lem}\label{lem_nci_1}
    Assume that $r_{ij} \neq 0$ and $c_2 < r_{21} + r_{23}$. Then $I^* = (y^{r_{12}}z^{r_{13}}, y^{c_2},z^{c_3})$.
\end{lem}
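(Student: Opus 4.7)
The plan is to verify both inclusions of $I^* = (y^{r_{12}} z^{r_{13}}, y^{c_2}, z^{c_3})$. For the inclusion $\supseteq$, I identify these three monomials with the initial forms of $f_{\w_1}, f_{\w_2}, f_{\w_3}$ respectively: from $c_1 n_1 = r_{12} n_2 + r_{13} n_3 > (r_{12} + r_{13}) n_1$ one obtains $c_1 > r_{12} + r_{13}$, so $f_{\w_1}^* = y^{r_{12}} z^{r_{13}}$; symmetrically $c_3 n_3 = r_{31} n_1 + r_{32} n_2 < (r_{31} + r_{32}) n_3$ gives $f_{\w_3}^* = z^{c_3}$; and the standing hypothesis $c_2 < r_{21} + r_{23}$ yields $f_{\w_2}^* = y^{c_2}$.

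For the reverse inclusion, by Lemma~\ref{lem_tangent_cone_space_curve} it suffices to show $f_\w^* \in L := (y^{r_{12}} z^{r_{13}}, y^{c_2}, z^{c_3})$ for $\w = a_1 \w_1 + a_2 \w_2$ or $\w = b_1 \w_1 - b_2 \w_2$ with positive integers. The central auxiliary input is the slope chain
\[
\frac{\delta_1}{|\delta_2|} \;>\; \frac{c_1}{r_{21}} \;>\; \frac{r_{12}}{c_2},
\]
where $\delta_1 = c_1 - r_{12} - r_{13} > 0$ and $|\delta_2| = r_{21} + r_{23} - c_2 > 0$. To establish it, I multiply $c_1 n_1 = r_{12} n_2 + r_{13} n_3$ by $c_2$ and substitute $c_2 n_2 = r_{21} n_1 + r_{23} n_3$ to get $(c_1 c_2 - r_{12} r_{21}) n_1 = (r_{12} r_{23} + r_{13} c_2) n_3$; since $n_1 < n_3$, this rearranges to $\delta_1 c_2 > |\delta_2| r_{12}$, and a parallel computation using the identity $c_2 n_3 - r_{23} n_3 - r_{21} n_1 = c_2(n_3 - n_2)$ together with $n_2 < n_3$ gives $\delta_1 r_{21} > c_1 |\delta_2|$.

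With the slope chain in hand, write $\w = (A, B, C)$ and split by sign pattern. For $\w = a_1\w_1 + a_2\w_2$ with $a_i \ge 1$ one has $|C| = a_1 r_{13} + a_2 r_{23} \ge c_3$. If $a_2/a_1 \le c_1/r_{21}$ (signatures $(+,-,-)$ or $(+,+,-)$), the slope chain forces $A + B + C > 0$, whence $f_\w^*$ is divisible by $z^{|C|}$ and lies in $(z^{c_3})$. If instead $a_2/a_1 > c_1/r_{21}$ (signature $(-,+,-)$), then $a_2 \ge a_1 + 1$; depending on whether $a_2/a_1$ lies below, on, or above the homogeneous slope $\delta_1/|\delta_2|$, one gets either $f_\w^* = x^{|A|} z^{|C|} \in (z^{c_3})$, or $f_\w$ homogeneous with both of its monomials in $L$, or $f_\w^* = y^B$ with $B \ge c_2 + a_1(c_2 - r_{12}) > c_2$ (using $c_2 = r_{12} + r_{32}$), so $f_\w^* \in (y^{c_2})$. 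For $\w = b_1\w_1 - b_2\w_2$ with $b_i \ge 1$, I compute $A + B + C = b_1\delta_1 + b_2|\delta_2| > 0$ and $|B| = b_1 r_{12} + b_2 c_2 \ge c_2$, so across every possible sign of $C$ the initial form $f_\w^*$ is divisible by $y^{c_2}$. The main technical obstacle is securing the slope chain; once this is established, the sector-by-sector verification reduces to routine sign bookkeeping.
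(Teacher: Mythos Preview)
Your proof is correct and follows the same overall strategy as the paper: establish $\supseteq$ by identifying the three generators as $f_{\w_1}^*$, $f_{\w_2}^*$, $f_{\w_3}^*$, then use Lemma~\ref{lem_tangent_cone_space_curve} and a case analysis for $\subseteq$. The difference is purely organizational. You build an auxiliary slope chain $\delta_1/|\delta_2| > c_1/r_{21} > r_{12}/c_2$ and partition the $(a_1,a_2)$-plane into sectors according to these slopes. The paper instead splits only on the sign of a single coordinate at a time and leans directly on the Herzog identities $c_1 = r_{21}+r_{31}$, $c_2 = r_{12}+r_{32}$, $c_3 = r_{13}+r_{23}$. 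For instance, in your signature $(-,+,-)$ subcase the paper simply observes that $\alpha c_1 < \beta r_{21}$ together with $c_1 > r_{21}$ forces $\alpha < \beta$, whence $B = \beta c_2 - \alpha r_{12} \ge c_2$, and then notes that \emph{both} monomials of $f_\w$ already lie in $L$ (one divisible by $y^{c_2}$, the other by $z^{c_3}$), so there is no need to decide which one is the initial form. Your slope chain makes the sector geometry explicit, which is conceptually pleasant and would scale well to more intricate cases; the paper's version is shorter here because it avoids deriving the chain and sidesteps the three-way subcase split in the $(-,+,-)$ sector.
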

\begin{proof} Let $J = (y^{r_{12}} z^{r_{13}}, y^{c_2},z^{c_3})$. Then $J \subseteq I^*$. We need to show that for any $\w \in \Lambda$, $f_\w^* \in J$. By Lemma \ref{lem_tangent_cone_space_curve}, there are two cases.

\noindent \textbf{Case 1.} $\w = \alpha \w_1 + \beta \w_2 = (\alpha c_1 -\beta r_{21}, -\alpha r_{12} + \beta c_2, -\alpha r_{13} -\beta r_{23})$. There are two subcases. 

Case 1.a. $\alpha c_1 - \beta r_{21} > 0$. Then $\w$ is of type $1$ or type $3$. In either case, we have $f_\w^* \in (z^{\alpha r_{13} + \beta r_{23}}) \in J$, as $c_3 = r_{13} + r_{23}$. 

Case 1.b. $\alpha c_1 - \beta r_{21} < 0$. Then $\alpha < \beta$. Hence, $-\alpha r_{12} + \beta c_2 > c_2$. Therefore, $y^{-\alpha r_{12} + \beta c_2} \in (y^{c_2})$ and $x^{\beta r_{21} - \alpha c_1} z^{\alpha r_{13} + \beta r_{23}} \in J$. Thus $f_\w^* \in J$.

\noindent \textbf{Case 2.} $\w = \alpha \w_1 - \beta \w_2 = (\alpha c_1 + \beta r_{21}, -\alpha r_{12} - \beta c_2, -\alpha r_{13} + \beta r_{23})$. There are two subcases. 

Case 2.a. $-\alpha r_{13} + \beta r_{23} < 0$. Then $f_\w^* = y^{\alpha r_{12} + \beta c_2} z^{\alpha r_{13} - \beta r_{23}} \in (y^{c_2})$. 

Case 2.b. $-\alpha r_{13} + \beta r_{23} > 0$. Then $f$ is of type $2$. Since $c_2 < r_{21} + r_{23}$, we have 
$$  \alpha c_1 + \beta r_{21} - \alpha r_{13} + \beta r_{23} > \alpha r_{12} + \beta c_2. $$
Hence $f_\w^* = y^{\alpha r_{12} + \beta c_2} \in J$. The conclusion follows. 
\end{proof}
\begin{exm}Let $H = \langle 13,20,31\rangle$. Then $I = (x^7 - y^3 z, y^7 - x^6 z^2, z^3 - xy^4)$ and $I^* = (z^3,y^3z,y^7)$.
\end{exm}

\begin{lem}\label{lem_nci_addition} Assume that $r_{ij} \neq 0$ and $c_2 \ge r_{21} + r_{23}$. Let $\alpha,\beta > 0$ and $\w = \alpha \w_1 + \beta \w_2$. Then $f_{\w}^* \in (z^{c_3})$.
\end{lem}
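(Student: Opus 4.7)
The plan is to carry out a direct case analysis on the signs of the first two components of
$$\w = \alpha\w_1 + \beta\w_2 = (\alpha c_1 - \beta r_{21},\ \beta c_2 - \alpha r_{12},\ -(\alpha r_{13} + \beta r_{23})).$$
The third component is visibly negative, and its absolute value equals $\alpha r_{13} + \beta r_{23}$. In the non-complete-intersection case, Herzog's identity $\v_1+\v_2+\v_3=0$ gives $c_3 = r_{13} + r_{23}$; so for any $\alpha,\beta>0$ the exponent of $z$ in $f_\w$ is already at least $c_3$. Consequently, whenever $z$ appears in $f_\w^*$ at all, the conclusion $f_\w^* \in (z^{c_3})$ is automatic, and the only real issue is to show that $f_\w^*$ is not a pure power of $y$.

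I would first dispose of the three easy sign patterns. When $\alpha c_1 - \beta r_{21} \ge 0$ and $\beta c_2 - \alpha r_{12} \ge 0$, the binomial is of type $3$ shape, so $f_\w^* = z^{\alpha r_{13}+\beta r_{23}}\in(z^{c_3})$. When $\alpha c_1 - \beta r_{21} \ge 0$ but $\beta c_2 - \alpha r_{12} < 0$, it is of type $1$ shape, and $f_\w^* = y^{\alpha r_{12}-\beta c_2}z^{\alpha r_{13}+\beta r_{23}} \in (z^{c_3})$. The remaining fourth pattern, $\alpha c_1 - \beta r_{21} < 0$ together with $\beta c_2 - \alpha r_{12} < 0$, is impossible: Herzog's identity yields $c_1 = r_{21}+r_{31}$ and $c_2 = r_{12}+r_{32}$ with all $r_{ij}>0$, and the two inequalities then force both $\beta>\alpha$ and $\alpha>\beta$.

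The genuinely delicate case is $\alpha c_1 - \beta r_{21} < 0$ and $\beta c_2 - \alpha r_{12} \ge 0$, where $\w$ is of type $2$ and
$$f_\w = y^{\beta c_2 - \alpha r_{12}} - x^{\beta r_{21} - \alpha c_1}z^{\alpha r_{13} + \beta r_{23}}.$$
The obstacle is that $f_\w^*$ could a priori be the pure $y$-power, which is not in $(z^{c_3})$. I would rule this out by computing the degree difference of the two terms:
$$(\beta c_2 - \alpha r_{12}) - (\beta r_{21} - \alpha c_1 + \alpha r_{13} + \beta r_{23}) = \alpha\delta_1 + \beta\delta_2,$$
where $\delta_1 = c_1 - r_{12} - r_{13}$ and $\delta_2 = c_2 - r_{21} - r_{23}$. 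Writing $c_1 n_1 = r_{12}(n_1+a) + r_{13}(n_1+b)$ gives $\delta_1 n_1 = r_{12}a + r_{13}b$, which is strictly positive since $n_1<n_2<n_3$ and all $r_{ij}>0$; and $\delta_2 \ge 0$ is exactly the standing hypothesis $c_2 \ge r_{21}+r_{23}$. Hence $\alpha\delta_1 + \beta\delta_2 > 0$, the $xz$-side strictly dominates in degree, and
$$f_\w^* = -x^{\beta r_{21}-\alpha c_1}z^{\alpha r_{13}+\beta r_{23}} \in (z^{c_3}),$$
which completes the argument.
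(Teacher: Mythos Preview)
Your proof is correct and follows essentially the same approach as the paper's: a case split on the sign of the first coordinate of $\w$, together with the key degree computation $\alpha\delta_1+\beta\delta_2>0$ (the paper states the resulting inequality without justification, whereas you spell out why $\delta_1>0$ and handle the impossible sign pattern explicitly). One wording slip: after showing $\alpha\delta_1+\beta\delta_2>0$ you say ``the $xz$-side strictly dominates in degree,'' but in fact the $y$-term has the larger degree---it is precisely because the $xz$-term has \emph{smaller} degree that it is the initial form, which is the conclusion you then (correctly) draw.
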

\begin{proof} We have $\w = \alpha \w_1 + \beta \w_2 = (\alpha c_1 - \beta r_{21}, -\alpha r_{12} + \beta c_2, -\alpha r_{13} - \beta r_{23})$. There are two cases. 

\noindent \textbf{Case 1.}  $\alpha c_1 - \beta r_{21} \ge 0$. Then $\w$ is of type $1$ or type $3$. In either case, we have $f_\w^* \in (z^{\alpha r_{13} + \beta r_{23}}) \subseteq (z^{c_3})$, as $c_3 = r_{13} + r_{23}$. 

\noindent \textbf{Case 2.}  $\alpha c_1 - \beta r_{21} < 0$. We have 
$$\beta r_{21} - \alpha c_1 + \alpha r_{13} + \beta r_{23} < -\alpha r_{12} + \beta c_2.$$
Hence, $f_\w^* = x^{\beta r_{21} - \alpha c_1} z^{\alpha r_{13} + \beta r_{23}} \in (z^{c_3})$.

The conclusion follows.
\end{proof}

\begin{lem}\label{lem_nci_2} Assume that $r_{ij} \neq 0$ and $c_2 = r_{21} + r_{23}$. Then $I^* = (y^{r_{12}} z^{r_{13}}, y^{c_2} - x^{r_{21}} z^{r_{23}},z^{c_3})$.  
\end{lem}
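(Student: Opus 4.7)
The plan is to follow the template of Lemma~\ref{lem_nci_1}: first verify $J \subseteq I^*$ for $J = (y^{r_{12}}z^{r_{13}},\, h,\, z^{c_3})$ where $h = y^{c_2} - x^{r_{21}}z^{r_{23}}$, and then show $f_\w^* \in J$ for every $\w \in \Lambda$ via Lemma~\ref{lem_tangent_cone_space_curve}. In the non-CI setting $\v_1+\v_2+\v_3 = 0$ forces $c_3 = r_{13}+r_{23}$, and the positivity of the $r_{ij}$'s gives $c_1 > r_{12}+r_{13}$ and $c_3 < r_{31}+r_{32}$; together with the hypothesis $c_2 = r_{21}+r_{23}$ this yields $f_{\w_1}^* = y^{r_{12}}z^{r_{13}}$ (up to sign), $f_{\w_2}^* = f_{\w_2} = h$ (homogeneous under our hypothesis), and $f_{-\w_1-\w_2}^* = z^{c_3}$, proving $J \subseteq I^*$.

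For the reverse inclusion, Lemma~\ref{lem_tangent_cone_space_curve} reduces the task to handling $\w = \alpha\w_1 + \beta\w_2$ and $\w = \alpha\w_1 - \beta\w_2$ with $\alpha,\beta > 0$. The first family is disposed of at once by Lemma~\ref{lem_nci_addition}, giving $f_\w^* \in (z^{c_3}) \subseteq J$. For the second family, computing $\w = (\alpha c_1 + \beta r_{21},\, -\alpha r_{12}-\beta c_2,\, -\alpha r_{13}+\beta r_{23})$ and exploiting both $c_1 > r_{12}+r_{13}$ and $c_2 = r_{21}+r_{23}$ shows that in each of the three sub-cases $\alpha r_{13} \gtreqless \beta r_{23}$ the $x$-monomial of $f_\w$ has strictly larger total degree than the $y$-monomial, so the initial form collapses uniformly to
\[
    f_\w^* \;=\; \pm\, y^{\alpha r_{12} + \beta c_2}\, z^{(\alpha r_{13} - \beta r_{23})_+},
\]
where $(X)_+ := \max(X,0)$.

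The key step, and the one that departs materially from Lemma~\ref{lem_nci_1}, is to reduce this monomial modulo $h$. From $y^{c_2} \equiv x^{r_{21}}z^{r_{23}} \pmod{h}$ I obtain $y^{\beta c_2} \equiv x^{\beta r_{21}} z^{\beta r_{23}} \pmod{h}$, hence
\[
    y^{\alpha r_{12} + \beta c_2}\, z^{(\alpha r_{13} - \beta r_{23})_+} \;\equiv\; x^{\beta r_{21}}\, y^{\alpha r_{12}}\, z^{\max(\alpha r_{13},\, \beta r_{23})} \pmod{h}.
\]
Since $\alpha\geq 1$, the right-hand side is divisible by $y^{r_{12}}z^{r_{13}}$, so it lies in $J$; combined with $h \in J$ this forces $f_\w^* \in J$. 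The main obstacle is spotting this substitution — once seen, the argument reduces to the same bookkeeping used in the previous lemmas, and verifying that the $x$-term dominates in each sub-case is a one-line computation from the two identities $c_1 > r_{12}+r_{13}$ and $c_2 = r_{21}+r_{23}$.
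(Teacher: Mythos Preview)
Your proof is correct and follows essentially the same approach as the paper: both establish $J\subseteq I^*$ from the initial forms of $f_{\w_1},f_{\w_2},f_{\v_3}$, invoke Lemma~\ref{lem_nci_addition} for the $\alpha\w_1+\beta\w_2$ family, and for $\w=\alpha\w_1-\beta\w_2$ reduce the monomial $y^{\alpha r_{12}+\beta c_2}z^{(\alpha r_{13}-\beta r_{23})_+}$ modulo $h$ (the paper writes this as subtracting $y^{\alpha r_{12}}z^{(\cdot)}f_{\beta\w_2}$) to land in $(y^{r_{12}}z^{r_{13}})$. Your presentation is marginally slicker in that the single formula $f_\w^*=\pm y^{\alpha r_{12}+\beta c_2}z^{(\alpha r_{13}-\beta r_{23})_+}$ and the observation $z^{\beta r_{23}+(\alpha r_{13}-\beta r_{23})_+}=z^{\max(\alpha r_{13},\beta r_{23})}$ handle all three sign sub-cases at once, whereas the paper splits into two cases; but the underlying computation is identical.
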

\begin{proof}
    Let $J = (y^{r_{12}} z^{r_{13}}, y^{c_2} - x^{r_{21}} z^{r_{23}},z^{c_3})$. Then $J \subseteq I^*$. We need to show that for any $\w \in \Lambda$, $f_\w^* \in J$. By Lemma \ref{lem_tangent_cone_space_curve} and Lemma \ref{lem_nci_addition}, we may assume that $\w = \alpha \w_1 - \beta \w_2 = (\alpha c_1 + \beta r_{21}, -\alpha r_{12} - \beta c_2, -\alpha r_{13} + \beta r_{23})$. There are two cases. 

\noindent \textbf{Case 1.}  $-\alpha r_{13} + \beta r_{23} \le 0$. Then $f_\w* = y^{\alpha r_{12} + \beta c_2} z^{\alpha r_{13} - \beta r_{23}}$. Now we have 
$$f_\w^* - y^{\alpha r_{12}} f_{\beta \w_2} = x^{\beta r_{21}} z^{\beta r_{23}} z^{\alpha r_{13} - \beta r_{23}} = (y^{r_{12}} z^{r_{13}})^\alpha x^{\beta r_{21}} \in J.$$
    
\noindent \textbf{Case 2.}  $-\alpha r_{13} + \beta r_{23} > 0$. Then $f_\w$ is of type $2$. But by assumption, we have 
$$\alpha c_1 + \beta r_{21} - \alpha r_{13} + \beta r_{23} > \alpha r_{12} + \beta c_2.$$
Hence $f_\w^* = y^{\alpha r_{12} + \beta c_2}$. Now we have 
$$    f_\w^* - y^{\alpha r_{12}} f_{\beta \w_2} = y^{\alpha r_{12}} x^{\beta r_{21}} z^{\beta r_{23}}.$$
Since $-\alpha r_{13} + \beta r_{23} > 0$, $y^{\alpha r_{12}} z^{\beta r_{23}}$ is divisible by $(y^{r_{12}} z^{r_{13}})^\alpha$.
\end{proof}

\begin{exm}This is typical case of non-complete intersection monomial curve with large shift. For example, consider the semigroup obtained by shiting the semigroup from the previous example, $H = \langle 193, 200, 211 \rangle$. Then we have $I = (x^{16} - y^7 z^8, y^{18} - x^{11} z^7, z^{15} - x^5 y^{11})$ and $I^* = (y^7 z^8, y^{18} - x^{11} z^7, z^{15})$.   
\end{exm}

Now assume that $c_2 > r_{21} + r_{23}$. Let $\delta_1 = c_1 - r_{12} - r_{13}$ and $\delta_2 = c_2 - r_{21} - r_{23}$. Since $\w = (-(b-a)/d,b/d,-a/d) \in \Lambda$, there exist $\alpha$ and $\beta$ such that $\w = \alpha \w_1 + \beta \w_2$. In particular, $\alpha \delta_1 + \beta \delta_2 = 0$ and 
$\alpha c_1 - \beta r_{21} = -(b-a)/d$ and $-\alpha r_{12} + \beta c_2 = b/d$. Thus, we must have $\alpha < 0$ and $\beta > 0$. We fix $\alpha_h = -\alpha$ and $\beta_h = \beta$. Then we have

\begin{equation}\label{eq_nci_3_1}
    \begin{cases}
    \alpha_h c_1 + \beta_h r_{21} &= (b-a)/d\\
    \alpha_h r_{12} + \beta_h c_2 &= b/d \\
    -\alpha_h r_{13} + \beta_h r_{23} & = a/d.
    \end{cases}
\end{equation}
First, we claim

\begin{lem}\label{lem_nci_3_1} Assume that $r_{ij} \neq 0$ for all $i,j$ and $c_2 > r_{21} + r_{23}$. Then $\frac{\delta_2}{\delta_1} < \frac{r_{23}}{r_{13}}$.    
\end{lem}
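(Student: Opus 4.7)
The plan is to mimic the arguments of Lemma \ref{lem_ci_2_1} and Lemma \ref{lem_ci_5_1} essentially verbatim. The key observation is that the hypotheses have been arranged so that equation \eqref{eq_nci_3_1} isolates the sign information needed.

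First, I would read off from the third line of \eqref{eq_nci_3_1} that $\beta_h r_{23} - \alpha_h r_{13} = a/d$. Since $a = n_2 - n_1 > 0$ and $d = \gcd(a,b) > 0$, the right-hand side is strictly positive, so $\beta_h r_{23} > \alpha_h r_{13}$. Using the positivity of $r_{13}$ and $\beta_h$ (both of which are forced by the standing hypothesis $r_{ij} \neq 0$ together with the sign analysis $\alpha_h = -\alpha > 0$, $\beta_h = \beta > 0$ explained just before the lemma), I can divide to obtain
\[
\frac{\alpha_h}{\beta_h} < \frac{r_{23}}{r_{13}}.
\]

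Second, I would translate $\alpha_h/\beta_h$ into $\delta_2/\delta_1$ using the relation $\alpha \delta_1 + \beta \delta_2 = 0$ recorded in the paragraph introducing $\alpha_h, \beta_h$. Substituting $\alpha = -\alpha_h$ and $\beta = \beta_h$ yields $\beta_h \delta_2 = \alpha_h \delta_1$. The hypothesis $c_2 > r_{21} + r_{23}$ gives $\delta_2 > 0$, and then $\beta_h \delta_2 = \alpha_h \delta_1$ with $\alpha_h, \beta_h > 0$ forces $\delta_1 > 0$ as well, so
\[
\frac{\delta_2}{\delta_1} = \frac{\alpha_h}{\beta_h}.
\]
Combining the two displayed inequalities yields the claim.

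There is no genuine obstacle here; the statement is a direct consequence of \eqref{eq_nci_3_1}. The only bookkeeping step worth stressing is verifying the positivity of $\delta_1$, since unlike $\delta_2$ it is not given by hypothesis but must be deduced from the sign of $\delta_2$ and the relation $\beta_h \delta_2 = \alpha_h \delta_1$. Everything else is a one-line division.
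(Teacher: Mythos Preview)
Your proposal is correct and follows exactly the paper's argument: the paper's proof is the two-line observation that $-\alpha_h r_{13}+\beta_h r_{23}=a/d>0$ implies $\frac{\delta_2}{\delta_1}=\frac{\alpha_h}{\beta_h}<\frac{r_{23}}{r_{13}}$. Your version is slightly more detailed in that you explicitly justify $\delta_1>0$ from $\beta_h\delta_2=\alpha_h\delta_1$, a point the paper leaves implicit.
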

\begin{proof}
    Since $-\alpha_h r_{13} + \beta_h r_{23} = a/d > 0$, we have 
    $$\frac{\alpha_h}{\beta_h} = \frac{\delta_2}{\delta_1} < \frac{r_{23}}{r_{13}}.$$
    The conclusion follows.
\end{proof}

Let $\frac{\eta}{\epsilon} = M(\frac{\delta_2}{\delta_1},\frac{r_{23}}{r_{13}})$, i.e., $\eta$ and $\epsilon$ are the smallest positive integers such that $\frac{\delta_2}{\delta_1} < \frac{\eta}{\epsilon} \le \frac{r_{23}}{r_{13}}$. We now define the following two sets.

Let $\alpha_0 = 1, \beta_0 =0$. For each $i \ge 0$, consider the system of inequalities 
\begin{equation} \label{eq_nci_3_2}
\begin{cases}
     \alpha r_{13} - \beta r_{23} & > 0 \\
    \alpha r_{13} - \beta r_{23} & < \alpha_i r_{13} - \beta_i r_{23} \\
    \beta &> \beta_i.
\end{cases}
\end{equation}
When the system \eqref{eq_nci_3_2} has solutions, let $\beta_{i+1}$ be the smallest solution among $\beta$. Then $\alpha_{i+1} = \varphi_r(\beta_{i+1})$ where $r = r_{23}/r_{13}$. There can be only finitely many such $\beta_i$ as the sequence $\{ \alpha_i r_{13} - \beta r_{23} \}$ is a strictly decreasing sequence of positive integers. Let $t$ be the largest index of $i$ such that $\beta_t < \epsilon$ and $A = \{(\alpha_i,\beta_i) \mid i = 0, \ldots, t\}$. Since $\alpha_0 r_{13} - \beta_0 r_{23} = r_{13}$, $|A| \le r_{13}$. For each $\alpha,\beta \in \NN$ such that $\alpha r_{13} - \beta r_{23} \ge 0$, let $p_{\alpha,\beta} = y^{\alpha r_{12} + \beta c_2} z^{\alpha r_{13} - \beta r_{23}}$. We have
\begin{lem}\label{lem_nci_aux_1} Assume that $\alpha' > 0$, $\beta' \ge 0$ be such that $\alpha' r_{13} - \beta' r_{23} \ge 0$. Then $p_{\alpha',\beta'} \in L = (g,p_{\alpha,\beta} \mid (\alpha,\beta) \in A)$.    
\end{lem}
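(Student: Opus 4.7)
The plan is to mimic the proof of Lemma \ref{lem_ci_2_2} almost verbatim, since the set $A$ here is defined by the system \eqref{eq_nci_3_2} in exactly the same form as before, with $r_{23}$ playing the role previously played by $c_3$. Throughout, I interpret $g = y^{\eta r_{12} + \epsilon c_2}$, consistent with the complete intersection case. The argument is a straightforward case analysis on the size of $\beta'$.

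First I would handle the boundary case $\beta' = 0$: then $p_{\alpha',0} = y^{\alpha' r_{12}} z^{\alpha' r_{13}}$ is visibly divisible by $p_{1,0} = y^{r_{12}} z^{r_{13}}$, and $(1,0) = (\alpha_0,\beta_0)$ belongs to $A$ by construction. Next, suppose $\beta' \ge \epsilon$. The hypothesis $\alpha' r_{13} - \beta' r_{23} \ge 0$ gives $\alpha'/\beta' \ge r_{23}/r_{13}$, and the definition of $M(\delta_2/\delta_1, r_{23}/r_{13})$ yields $r_{23}/r_{13} \ge \eta/\epsilon$. Combined with $\beta' \ge \epsilon$ this forces $\alpha' \ge \eta$, and hence the $y$-exponent of $p_{\alpha',\beta'}$ is at least $\eta r_{12} + \epsilon c_2$, so $p_{\alpha',\beta'}$ is divisible by $g$.

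The main case is $0 < \beta' < \epsilon$. Pick the largest index $i$ with $\beta_i < \beta'$. Since $\alpha_i = \varphi_r(\beta_i)$ for $r = r_{23}/r_{13}$ and $\alpha'/\beta' \ge r$, the definition of $\varphi_r$ gives $\alpha' \ge \alpha_i$. Now split according to whether $\alpha' r_{13} - \beta' r_{23}$ is at least $\alpha_i r_{13} - \beta_i r_{23}$: if so, both exponents of $p_{\alpha_i,\beta_i}$ are dominated by the corresponding exponents of $p_{\alpha',\beta'}$, giving divisibility; otherwise $(\alpha',\beta')$ satisfies the system \eqref{eq_nci_3_2} for this $i$, and minimality of $\beta_{i+1}$ combined with the choice of $i$ as the largest index with $\beta_i < \beta'$ forces $\beta' = \beta_{i+1}$, whence $\alpha' \ge \alpha_{i+1} = \varphi_r(\beta_{i+1})$ and $p_{\alpha',\beta'}$ is divisible by $p_{\alpha_{i+1},\beta_{i+1}}$.

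There is no serious obstacle here: the argument is a transcription of the earlier Lemma \ref{lem_ci_2_2} after the substitution $c_3 \leftrightarrow r_{23}$. The only point worth double-checking is that in the second alternative of the main case, one really does have $\beta' = \beta_{i+1}$ rather than some intermediate value; this is immediate from the maximality of $i$ and the minimality of $\beta_{i+1}$ among solutions of \eqref{eq_nci_3_2}.
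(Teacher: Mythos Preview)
Your proof is correct and follows essentially the same approach as the paper's own proof, which is indeed a near-verbatim transcription of Lemma~\ref{lem_ci_2_2} with $r_{23}$ in place of $c_3$. The case split on $\beta'=0$, $\beta'\ge\epsilon$, and $0<\beta'<\epsilon$, together with the use of $\varphi_r$ and the maximality/minimality argument to pin down $\beta'=\beta_{i+1}$, matches the paper exactly.
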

\begin{proof} If $\beta' = 0$, then $p_{\alpha',\beta'}$ is divisible by $p_{1,0}$. Assume that $\beta' \ge \epsilon$. Since $\frac{\alpha'}{\beta'} \ge \frac{r_{23}}{r_{13}} > \frac{\eta}{\epsilon}$, $\alpha' > \eta$. Thus, $p_{\alpha',\beta'}$ is divisible by $g$.

Thus, we may assume that $\beta' > 0$ and $\beta' < \epsilon$. Let $i$ be the largest index such that $\beta_i < \beta'$. Since $\frac{\alpha'}{\beta'} \ge r$ and $\alpha_i = \varphi_r(\beta_i)$ where $r = r_{23}/ r_{13}$, we deduce that $\alpha' \ge \alpha_i$. If $\alpha' r_{13} - \beta r_{23} \ge \alpha_i r_{13} - \beta_i r_{23}$ then $p_{\alpha',\beta'}$ is divisible by $p_{\alpha_i,\beta_i}$. If $\alpha' r_{13} - \beta' r_{23} < \alpha_i r_{13} - \beta_i r_{23}$, then $(\alpha',\beta')$ is a solution of the system \eqref{eq_nci_3_2}. Thus, we must have $\beta' = \beta_{i+1}$ as we assume that $\beta_{i+1} \ge \beta'$. Hence, $\alpha' \ge \alpha_{i+1}$. Thus, $p_{\alpha',\beta'}$ is divisible by $p_{\alpha_{i+1},\beta_{i+1}}$. The conclusion follows.    
\end{proof}

Let $\gamma_0 = 0$, $\sigma_0 = 1$. For each $i \ge 0$, consider the system of inequalities 
\begin{equation} \label{eq_ci_8}
\begin{cases}
     -\gamma \delta_1 + \sigma \delta_2 & > 0 \\
    -\gamma r_{13} + \sigma r_{23} & < \gamma_i r_{13} + \sigma_i r_{23} \\
    \gamma &> \gamma_i.
\end{cases}
\end{equation}
When the system \eqref{eq_ci_8} has solutions, let $\gamma_{i+1}$ be the smallest solution among $\gamma$. Then $\sigma_{i+1} = \varphi_r ( \gamma_{i+1})$ where $r = \delta_1 / \delta_2$. There can be only finitely many such $\gamma_i$ as the sequence $\{ -\gamma_i r_{13} + \sigma_i r_{23} \}$ is a strictly decreasing sequence of positive integers. Let $u$ be the largest index of $i$ and $B = \{(\gamma_i,\sigma_i) \mid i = 0, \ldots, u\}$. Since $-\gamma_0 r_{13} + \sigma_0 r_{23} = r_{23}$, $|B| \le r_{23}$. For each $\gamma,\sigma \in \NN$ such that $-\gamma r_{13} + \sigma r_{23} > 0$, let $q_{\gamma,\delta} = x^{\gamma c_1 + \sigma r_{21}} z^{-\gamma r_{13} + \sigma r_{23}}$. We have 
\begin{lem}\label{lem_nci_aux_2} Assume that $\sigma' > 0$ and $\gamma' \ge 0$ be such that $-\gamma' \delta_1 + \sigma' \delta_2 > 0$. Then $q_{\gamma',\sigma'} \in L = (q_{\gamma,\sigma} \mid (\gamma,\sigma) \in B).$    
\end{lem}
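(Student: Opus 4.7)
The plan is to imitate the proof of Lemma \ref{lem_ci_2_3} (or equivalently Lemma \ref{lem_ci_5_3}), since the set $B$ and the monomials $q_{\gamma,\sigma}$ here are built by the exact same recursive procedure, only with $r_{23}$ playing the role that $c_3$ played before. The monomial $q_{\gamma,\sigma} = x^{\gamma c_1 + \sigma r_{21}} z^{-\gamma r_{13} + \sigma r_{23}}$ depends monotonically on $(\gamma,\sigma)$ in the $x$-part, so divisibility reduces to comparing the $z$-exponent $-\gamma r_{13} + \sigma r_{23}$ along the staircase $\{(\gamma_i,\sigma_i)\}_{i=0}^u$.

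First I would handle the base case $\gamma' = 0$: since $(\gamma_0,\sigma_0) = (0,1) \in B$ and $-\gamma' r_{13} + \sigma' r_{23} = \sigma' r_{23} \ge r_{23}$, the monomial $q_{\gamma',\sigma'}$ is divisible by $q_{0,1}$, so it lies in $L$. Now assume $\gamma' > 0$ and let $i$ be the largest index with $\gamma_i < \gamma'$. The hypothesis $-\gamma' \delta_1 + \sigma' \delta_2 > 0$ reads $\sigma'/\gamma' > \delta_1/\delta_2 = r$, which combined with $\sigma_i = \varphi_r(\gamma_i)$ and $\gamma' > \gamma_i$ gives $\sigma' \ge \sigma_i$; this is essentially the defining property of $\varphi_r$ and is the one spot needing mild care.

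With $\sigma' \ge \sigma_i$ in hand, branch on whether $-\gamma' r_{13} + \sigma' r_{23} \ge -\gamma_i r_{13} + \sigma_i r_{23}$ or not. In the first case, $q_{\gamma',\sigma'}$ is divisible by $q_{\gamma_i,\sigma_i}$, and we are done. In the second case, $(\gamma',\sigma')$ satisfies all three inequalities of the system \eqref{eq_ci_8} at step $i$, so by the minimality defining $\gamma_{i+1}$ and the maximality choice of $i$ we must have $\gamma' = \gamma_{i+1}$; the slope argument then forces $\sigma' \ge \sigma_{i+1}$, and $q_{\gamma',\sigma'}$ is divisible by $q_{\gamma_{i+1},\sigma_{i+1}}$, completing the proof.

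I expect no genuine obstacle: the only subtlety is the careful invocation of $\varphi_r$ to pass from the slope inequality to $\sigma' \ge \sigma_i$, which the authors have already used twice in the complete-intersection cases. Everything else is a direct transcription of Lemma \ref{lem_ci_2_3} with $c_3$ replaced by $r_{23}$.
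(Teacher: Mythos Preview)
Your proposal is correct and follows essentially the same argument as the paper's own proof: handle $\gamma'=0$ via $q_{0,1}$, then for $\gamma'>0$ pick the largest $i$ with $\gamma_i<\gamma'$, use $\sigma_i=\varphi_r(\gamma_i)$ together with $\sigma'/\gamma'>r=\delta_1/\delta_2$ to get $\sigma'\ge\sigma_i$, and branch on the $z$-exponent comparison exactly as you describe. There is no meaningful difference from the paper.
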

\begin{proof} If $\gamma' = 0$ then $p_{\gamma',\sigma'}$ is divisible by $q_{0,1}$. Thus, we may assume that $\gamma' > 0$. Let $i$ be the largest index such that $\gamma_i < \gamma'$, i.e., $\gamma_{i+1} \ge \gamma' > \gamma_i$. Since $\sigma_i = \varphi_r(\gamma_i)$ and $\frac{\sigma'}{\gamma'} > r$ where $r = \delta_1/\delta_2$, we deduce that $\sigma' \ge \sigma_i$. If $-\gamma' r_{13} + \sigma' r_{23} \ge \gamma_i r_{13} + \sigma_i r_{23} $ then $q_{\gamma',\sigma'}$ is divisible by $q_{\gamma_i,\sigma_i}$. If $-\gamma' r_{13} + \sigma' r_{23} < \gamma_i r_{13} + \sigma_i r_{23}$, then $(\gamma',\sigma')$ is a solution of \eqref{eq_ci_8}. Thus, $\gamma' \ge \gamma_{i+1}$. By our definition, we must have $\gamma' = \gamma_{i+1}$ and $\sigma' \ge \sigma_{i+1}$. Hence, $q_{\gamma',\sigma'}$ is divisible by $q_{\gamma_{i+1},\sigma_{i+1}}$. The conclusion follows.    
\end{proof}

\begin{lem}\label{lem_nci_3} Assume that $r_{ij} \neq 0$ and $c_2 > r_{21} + r_{23}$. Let $g = y^{\eta r_{12} + \epsilon c_2}$ and $h = y^{b/d} - x^{(b-a)/d}z^{a/d}$. Then
$$I^* = (z^{c_3},g,h) + (p_{\alpha,\beta} \mid (\alpha,\beta) \in A) + (q_{\gamma,\sigma} \mid (\gamma,\sigma) \in B).$$
Furthermore, 
\begin{enumerate}
    \item If $\beta_h < \epsilon$ then $I^* = (h) + (p_{\alpha,\beta} \mid (\alpha,\beta) \in A, \beta < \beta_h) + (q_{\gamma,\sigma} \mid (\gamma,\sigma) \in B).$
    \item If $\beta \ge \epsilon$ then $I^* = (g) + (p_{\alpha,\beta} \mid (\alpha,\beta) \in A) + (q_{\gamma,\sigma} \mid (\gamma,\sigma) \in B)$.
    \item The minimal generators of $I^*$ form a Gr\"obner basis of $I^*$ with respect to the reverse lexicographic order.
    \item $\mu(I^*) \le \max(r_{13},r_{23}) + 3$.
\end{enumerate}
\end{lem}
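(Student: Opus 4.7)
The plan is to follow the same six-step template used in Lemma \ref{lem_ci_2} and Lemma \ref{lem_ci_5}, adapted to the fact that here all $r_{ij}$ are nonzero, so $\w_1=(c_1,-r_{12},-r_{13})$ has both $y$ and $z$ components and $(1,0)\in A$ gives $p_{1,0}=y^{r_{12}}z^{r_{13}}$ rather than just $z^{r_{13}}$.

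In Step 1, I will verify $L\subseteq I^*$ by exhibiting each generator as an initial form $f_{\w}^*$: namely $z^{c_3}=f_{\w_1+\w_2}^*$ by Lemma \ref{lem_nci_addition}, $g=f_{\eta\w_1-\epsilon\w_2}^*$ (using that the choice of $\epsilon,\eta$ forces $-\eta r_{13}+\epsilon r_{23}\ge 0$ together with $\eta c_1+\epsilon r_{21}>\eta r_{12}+\epsilon c_2$, so the binomial is of type $2$ with smaller $y$-monomial), and similarly $p_{\alpha,\beta}=f_{\alpha\w_1-\beta\w_2}^*$ for $(\alpha,\beta)\in A$ and $q_{\gamma,\sigma}=f_{-\gamma\w_1+\sigma\w_2}^*$ for $(\gamma,\sigma)\in B$. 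For Step 2, I will reduce via Lemma \ref{lem_tangent_cone_space_curve}: Lemma \ref{lem_nci_addition} disposes of all $\alpha\w_1+\beta\w_2$ with $\alpha,\beta>0$ (they lie in $(z^{c_3})$), so I only need $\w=\alpha\w_1-\beta\w_2$ with $\alpha,\beta>0$, and I split on the sign of $-\alpha r_{13}+\beta r_{23}$. When it is negative, $f_\w^*=p_{\alpha,\beta}\in L$ by Lemma \ref{lem_nci_aux_1}; when positive, I compare $\alpha\delta_1$ with $\beta\delta_2$ exactly as in Case 2 of Lemma \ref{lem_ci_2}, Step 2: $\alpha\delta_1>\beta\delta_2$ gives $f_\w^*\in(g)$, $\alpha\delta_1<\beta\delta_2$ gives $f_\w^*=q_{\alpha,\beta}\in L$ by Lemma \ref{lem_nci_aux_2}, and $\alpha\delta_1=\beta\delta_2$ forces $(\alpha,\beta)=k(\alpha_h,\beta_h)$ so $f_\w^*\in(h)$.

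Steps 3 and 4 are the minimality statements. For $\beta_h\ge\epsilon$ I show $h\in(g)+(p_{\alpha,\beta})+(q_{\gamma,\sigma})$ by writing $\beta_h=k\epsilon+\beta'$ and reducing $q_{\alpha_h,\beta_h}$ by multiples of $\epsilon\w_2-\eta\w_1$, the same computation as Lemma \ref{lem_ci_5}, Step 3. For $\beta_h<\epsilon$ I conversely show $g$ and each $p_{\alpha,\beta}$ with $\beta\ge\beta_h$ lie in $(h)+(p_{\alpha,\beta}:\beta<\beta_h)+(q_{\gamma,\sigma})$, by writing $\epsilon=k\beta_h+\beta$ (resp.\ $\beta=k\beta_h+\beta'$) and subtracting $y^{\ast}f_{k(-\alpha_h\w_1+\beta_h\w_2)}$; the remainder is divisible either by $p_{1,0}=y^{r_{12}}z^{r_{13}}$ (when $\beta=0$, now using crucially that $p_{1,0}$ carries a $y^{r_{12}}$ factor, which is the main new ingredient versus the $r_{12}=0$ case) or by some $p_{\alpha',\beta'}$ with $\beta'<\beta_h$.

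For Step 5, the Gröbner basis claim, I apply Buchberger: the leading terms under revlex are $\mathrm{in}(h)=y^{\beta_h c_2}x^{\ast}$, $\mathrm{in}(p_{\alpha,\beta})=y^{\alpha r_{12}+\beta c_2}z^{\ast}$, $\mathrm{in}(q_{\gamma,\sigma})=x^{\ast}z^{\ast}$, and the only nontrivial $S$-pairs are $S(h,p_{\alpha,\beta})$ for $(\alpha,\beta)\in A$, $\beta<\beta_h$. A direct computation gives $S(h,p_{\alpha,\beta})=x^{\alpha_h c_1+\beta_h r_{21}}z^{(\alpha_h-\alpha)r_{13}+(\beta_h-\beta)r_{23}}$ after cancellation (the argument that $\alpha\le\alpha_h$ is the same $\varphi_r$ estimate as in Lemma \ref{lem_ci_2}, Step 5), and this is divisible by $q_{\alpha_h-\alpha,\beta_h-\beta}$, which in turn lies in $L$ by Lemma \ref{lem_nci_aux_2} because $(\alpha_h-\alpha)/(\beta_h-\beta)<\alpha_h/\beta_h=\delta_2/\delta_1$. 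Finally Step 6 bounds $\mu(I^*)$: by (1)--(2) only one of $\{g,h\}$ is a minimal generator, $|A|\le r_{13}$ and $|B|\le r_{23}$ by construction, and the usual case analysis on whether $r_{23}\mid r_{13}$ or not (combined with the observation that $(0,1)\in B$ already gives $q_{0,1}$ with no $x^{c_1}$ factor) shrinks one of the two sets, yielding $\mu(I^*)\le 1+1+\max(r_{13},r_{23})+1=\max(r_{13},r_{23})+3$.

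The main obstacle I expect is Step 5: the $S$-pair reduction requires that $(\alpha_h-\alpha,\beta_h-\beta)$ actually satisfy the positivity condition $-(\alpha_h-\alpha)\delta_1+(\beta_h-\beta)\delta_2>0$ that puts it in the domain of Lemma \ref{lem_nci_aux_2}. This follows from $\alpha_h/\beta_h=\delta_2/\delta_1$ together with $\alpha/\beta\ge r_{23}/r_{13}>\delta_2/\delta_1$, but verifying the sign carefully in the degenerate subcases (e.g.\ when $\beta_h-\beta$ is small or when $\alpha=\alpha_h-1$) is the most delicate part of the argument.
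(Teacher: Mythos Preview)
Your proposal follows the paper's own six-step argument essentially verbatim, and the overall logic is correct. A few technical slips to fix:

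\begin{itemize}
\item In Step~5 you write $\operatorname{in}(h)=y^{\beta_h c_2}x^{\ast}$. This is wrong: $h=y^{b/d}-x^{(b-a)/d}z^{a/d}$, so under revlex $\operatorname{in}(h)=y^{b/d}=y^{\alpha_h r_{12}+\beta_h c_2}$, a pure $y$-power with no $x$. This is exactly what the paper uses and is what makes the $S$-pairs $S(h,q_{\gamma,\sigma})$ trivial (coprime leading terms).
\item Your $S$-pair formula has a sign typo: the $z$-exponent should be $(\alpha-\alpha_h)r_{13}+(\beta_h-\beta)r_{23}$, not $(\alpha_h-\alpha)r_{13}+(\beta_h-\beta)r_{23}$. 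This is precisely the $z$-exponent of $q_{\alpha_h-\alpha,\beta_h-\beta}$, so the divisibility claim is an equality of $z$-exponents, and the key point is checking the $x$-exponent and the inequality $-(\alpha_h-\alpha)\delta_1+(\beta_h-\beta)\delta_2=\alpha\delta_1-\beta\delta_2>0$, which you correctly identify and which follows at once from $\alpha/\beta> r_{23}/r_{13}>\delta_2/\delta_1$.
\item Both the statement and the paper's proof silently drop $z^{c_3}$ from the generating sets in (1) and (2); since $p_{1,0}=y^{r_{12}}z^{r_{13}}$ and $q_{0,1}=x^{r_{21}}z^{r_{23}}$ with $r_{12},r_{21}>0$, the monomial $z^{c_3}=z^{r_{13}+r_{23}}$ is not in the ideal they generate. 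You should carry $(z^{c_3})$ through explicitly; this does not affect the bound in (4) since $\mu\le 2+|A|+|B|$ already accounts for it.
\end{itemize}

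One point where your write-up is actually cleaner than the paper's: in Step~4, case $\beta=0$, the correct reduction is $g-y^{r_{12}}f_{k(-\alpha_h\w_1+\beta_h\w_2)}=y^{r_{12}}x^{k(\alpha_h c_1+\beta_h r_{21})}z^{k\cdot a/d}$, and since $\eta/\epsilon\le r_{23}/r_{13}$ forces $k\cdot a/d\ge r_{13}$, this lies in $(p_{1,0})$ as you say. The paper omits the $y^{r_{12}}$ multiplier and claims the remainder lies in $(x^{r_{21}}z^{r_{23}})$, which need not hold when $k\cdot a/d<r_{23}$.
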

\begin{proof}
    Let $$L = (g,h) + (p_{\alpha,\beta} \mid (\alpha,\beta) \in A) + (q_{\gamma,\sigma} \mid (\gamma,\sigma) \in B.$$
For clarity, we divide the proof into several steps. The first two steps can be proved in similar manners as those of Lemma \ref{lem_ci_2}. We recall some arguments for completeness.

\vspace{2mm}

\noindent \textbf{Step 1.} $L \subseteq I^*$. 

\vspace{2mm}

\noindent \textbf{Step 2.} $I^* \subseteq L$. By Lemma \ref{lem_tangent_cone_space_curve} and Lemma \ref{lem_nci_addition}, we may assume that $\w = \alpha \w_1 - \beta \w_2 = (\alpha c_1 + \beta r_{21}, -\alpha r_{12} - \beta c_2, -\alpha r_{13} + \beta r_{23})$. There are two cases.

\noindent \textbf{Case 1.} $-\alpha r_{13} + \beta r_{23} \le 0$. Then, $f_\w^* = p_{\alpha,\beta} \in (g,p_{\alpha,\beta} \mid (\alpha,\beta) \in A)$ by Lemma \ref{lem_nci_aux_1}. 

\noindent \textbf{Case 2.} $-\alpha r_{13} + \beta r_{23} > 0$. There are three subcases.

Case 2.a. $\alpha \delta_1 < \beta \delta_2$. Then $f_\w^* = q_{\alpha,\beta} \in (q_{\gamma,\sigma} \mid (\gamma,\sigma) \in B)$ by Lemma \ref{lem_nci_aux_2}. 

Case 2.b. $\alpha \delta_1 = \beta \delta_2$, then there must exist $k$ such that $\alpha = k \alpha_h$ and $\beta = k \beta_h$. Hence, $f_\w^* = f_\w \in (h)$. 

Case 2.c. $\alpha \delta_1 > \beta \delta_2$. Then $f_\w^*  = y^{\alpha r_{12} + \beta c_2}$. Furthermore, we have $\frac{\delta_2}{\delta_1} < \frac{\alpha}{\beta} < \frac{r_{23}}{r_{13}}$. Hence, by the definition of $\eta$ and $\epsilon$ we must have $\beta \ge \epsilon$ and $\alpha \ge \eta$. Hence, $f_\w^* \in (g)$.

\vspace{2mm}

\noindent \textbf{Step 3.} Assume that $\beta_h > \epsilon$. Then $h \in M = (g) + (p_{\alpha,\beta} \mid (\alpha,\beta) \in A) + (q_{\gamma,\sigma} \mid (\gamma,\sigma) \in B).$

Since $\eta = \varphi_r(\epsilon)$ where $r = \delta_2/\delta_1 = \alpha_h/\beta_h$, we deduce that $\alpha_h \ge \eta$. Thus, $y^{\alpha_h r_{12} + \beta_h c_2} \in (g)$. It suffices to prove that $q_{\alpha_h,\beta_h} \in M$. Let $\beta_h = k \epsilon + \beta$ for $0 \le \beta < \beta_h$. If $\beta = 0$, then $k \ge 2$, let $\gamma = \alpha_h - (k-1) \eta$ and $\sigma = \beta_h - (k-1) \epsilon$. If $\beta > 0$, let $\gamma = \alpha_h - k \eta$ and $\sigma = \beta_h - k \epsilon$. Then we have $\frac{\delta_2}{\delta_1} = \frac{\alpha_h}{\beta_h} > \frac{\gamma}{\sigma}$. Hence, $q_{\alpha,\beta}$ is divisible by $q_{\gamma,\sigma}$. By Lemma \ref{lem_nci_aux_2}, $q_{\gamma,\sigma} \in M$. The conclusion follows.

\vspace{2mm}

\noindent \textbf{Step 4.} Assume that $\beta_h \le \epsilon$. Let $M = (h) + (p_{\alpha,\beta} \mid (\alpha,\beta) \in A, \beta < \beta_h) + (q_{\gamma,\sigma} \mid (\gamma,\sigma) \in B)$. Then $g$ and $p_{\alpha,\beta}$ with $(\alpha,\beta) \in A$ and $\beta \ge \beta_h$ belongs to $M$.

For $g$: Let $\epsilon = k \beta_h + \beta$ where $0 \le \beta < \beta_h$. If $\beta = 0$, then we have $\eta = k \alpha_h + 1$. Furthermore, $\frac{\eta}{\epsilon} \le \frac{r_{23}}{r_{13}}$. Hence, $ k \beta_h r_{23} \ge (k \alpha_h + 1) r_{13}$. Thus 
$$g - f_{-k \alpha_h \w_1 + k \beta_h \w_2} = x^{k \alpha_h c_1 + k \beta_h r_{21}} z^{-k \alpha_h r_{13} + k \beta_h r_{23}} \in (x^{r_{21}}z^{r_{23}}).$$
Now assume that $\beta > 0$. Then we also have $\eta \ge k \alpha_h + 1$. Furthermore, $\frac{\eta - k\alpha_h}{\epsilon - k \beta_h} > \frac{\alpha_h}{\beta_h}$. By definition of $\epsilon$ we must have $\frac{\eta - k \alpha_h}{\epsilon - k \beta_h} > \frac{r_{23}}{c_3}$. Let $\alpha = \eta - k \alpha_h$. Then, we have 
$$g - y^{\beta c_2} f_{-k \alpha_h \w_1 + k \beta_h \w_2} = x^{k\alpha_h c_1 + k \beta_h r_{21}} y^{\beta c_2} z^{-k \alpha_h c_3 + k \beta_h r_{23}} \in (p_{\alpha,\beta}) \subseteq M,$$
by Lemma \ref{lem_nci_aux_1}.

For $p_{\alpha,\beta}$ with $(\alpha,\beta) \in A$ and $\beta \ge \beta_h$. As before, we write $\beta = k \beta_h + \beta'$. Then we must have $\alpha \ge k \beta_h + 1$. Let $\alpha = k \alpha_h + \alpha'$. If $\beta' = 0$, we deduce that $p_{\alpha,\beta} - f_{-k \alpha_h\w_1 + k \beta_h \w_2} \in M$ as in the previous case. If $\beta' > 0$, then we also have $\frac{\alpha'}{\beta'} > \frac{r_{23}}{c_3}$. Hence, $p_{\alpha,\beta} - f_{-k \alpha_h \w_1 + k \beta_h \w_2} \in (p_{\alpha',\beta'})$.

\vspace{2mm}

\noindent \textbf{Step 5.} Assume that $\beta_h \le \epsilon$. Then $\{ h \} \cup \{p_{\alpha,\beta} \mid (\alpha,\beta) \in A, \beta < \beta_h \} \cup \{q_{\gamma,\sigma} \mid (\gamma,\sigma) \in B\}$ form a Gr\"obner basis of $I^*$ with respect to the revlex order. 

Since $\operatorname{in}(h) = y^{\alpha_h r_{12} + \beta_h c_2}$, it suffices to prove that for any $(\alpha,\beta) \in A, \beta < \beta_h$, the S-pair $S(h,p_{\alpha,\beta})$ reduces to $0$. We have 
$$S(h,p_{\alpha,\beta}) = x^{\alpha_h c_1 + \beta_h r_{21}} z^{(\alpha - \alpha_h) r_{13} + (\beta_h - \beta) r_{23}}.$$
If $\alpha \ge \alpha_h$ then the exponent of $z$ is at least $r_{23}$; hence, $S(h,p_{\alpha,\beta}) \in (q_{0,1})$. Thus, we may assume that $\alpha < \alpha_h$. Since $\frac{\delta_2}{\delta_1} < \frac{r_{23}}{r_{13}} \le \frac{\alpha}{\beta}$, we deduce that $\frac{\alpha_h - \alpha}{\beta_h - \beta} < \frac{\delta_2}{\delta_1}$. Hence $S(h,p_{\alpha,\beta}) \in (q_{\alpha_h - \alpha, \beta_h - \beta})$. By Lemma \ref{lem_nci_aux_2}, the conclusion follows.

\vspace{2mm}

\noindent \textbf{Step 6.} $\mu(I^*) \le \max(r_{13},r_{23}) + 3$. There are two cases.

\noindent \textbf{Case 1.} $r_{23} \ge r_{13}$. Let $r_{23} = k r_{13} + s_{13}$ with $0\le s_{13} < r_{13}$. If $s_{13} = 0$ then the system \eqref{eq_ci_5_2} has no solutions. Thus $|A| = 1$. Hence, $\mu(I^*) \le 2 + 1 + r_{23}$. Now assume that $s_{13} > 0$. If $\frac{\delta_2}{\delta_1} \le 1$ then the system \eqref{eq_ci_5_3} has no solutions. Hence $|B| \le 1$. Thus $\mu(I^*) \le r_{13} +3.$ If $\frac{\delta_2}{\delta_1} > 1$ then $(1,1) \in B$. Since $q_{1,1} = x^{c_1 + r_{21}} z^{r_{23} - r_{13}}$, $|B| \le r_{23} - r_{13} + 1$. Thus $\mu(I^*) \le 2 + r_{23} - r_{13} + r_{13} + 1 = r_{23} + 3$.

\noindent \textbf{Case 2.} $r_{23} < r_{13}$. Then $(1,1)\in A$ and $p_{1,1} = y^{r_{12} + c_2} z^{r_{13} - r_{23}}$. Thus $|A| \le r_{13} - r_{23} + 1$. Hence $\mu(I^*) \le 2 + r_{13} - r_{23} + 1 + r_{23} = r_{13} + 3$.

That concludes the proof of the lemma.
\end{proof}

\begin{exm}Let $H = \langle 265, 280,655 \rangle$. Then $I = (x^{6} - y z^2, y^{22} - xz^{9}, x^5 y^{21} - z^{11} )$ and $I^* = (yz^2,xz^{9}, z^{11},x^7z^7,x^{13}z^5,x^{19}z^3,y^{26} - x^{25}z ).$
\end{exm}

\section{Bounding Betti numbers of monomial space curves}\label{sec_betti_bounds}

In this section, using the description of $I_H^*$ in the previous two sections, we prove our main result. 

First, we recall the following general bound on the Betti numbers of a monomial ideal \cite[Theorem 6.29]{MS}. Assume that $n < r$. The cyclic polytope $C_{n,r}$ is defined to be the convex hull of $r$ distinct points on the moment curve $t \to (t^1,\ldots,t^n)$. We have

\begin{thm}\label{thm_upperbound} The number $\beta_i(I)$ of minimal $i$th syzygies of any monomial
ideal $I$ with $r$ generators in $n$ variables is bounded above by the number
$C_{i,n,r}$ of $i$-dimensional faces of the cyclic $n$-polytope with $r$ vertices. If $i = n - 1$ then we even have $\beta_i(I) \le  C_{n-1,n,r} - 1$.
\end{thm}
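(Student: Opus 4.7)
The plan is to combine a deformation to the strongly generic case, the Scarf complex of Bayer--Peeva--Sturmfels, and McMullen's Upper Bound Theorem for simplicial polytopes. Given a monomial ideal $I\subset k[x_1,\ldots,x_n]$ with $r$ minimal generators, I would first construct a strongly generic deformation $\widetilde I$ by perturbing the exponent vectors of the generators so that no two distinct generators agree in any coordinate in which both are positive. By upper-semicontinuity of Betti numbers under specialization (equivalently, by comparing the lcm-lattice of $I$ with that of $\widetilde I$ and applying the Gasharov--Peeva--Welker formula), one gets $\beta_i(I)\le \beta_i(\widetilde I)$ for every $i$. So it suffices to bound $\beta_i(\widetilde I)$ for a strongly generic monomial ideal.

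Next, for a strongly generic monomial ideal $\widetilde I$, the Scarf complex $\Delta_{\widetilde I}$ — the simplicial complex on the generating set whose faces are subsets $F$ with $\mathrm{lcm}(F)\neq \mathrm{lcm}(F')$ for all $F'\neq F$ — supports the minimal free resolution of $S/\widetilde I$. In particular,
\[
\beta_i(\widetilde I)=f_i(\Delta_{\widetilde I}),
\]
the number of $i$-dimensional faces of the Scarf complex. The key geometric input is that $\Delta_{\widetilde I}$ embeds as a subcomplex of the boundary of a simplicial $n$-polytope with $r$ vertices. Concretely, one uses the hull complex: lifting each exponent vector $\mathbf a_j$ to $(t^{a_{j1}},\ldots,t^{a_{jn}})$ for sufficiently large $t$ and taking the lower convex hull produces a polyhedral complex in $\mathbb R^n$ that contains $\Delta_{\widetilde I}$ (they coincide in the generic case). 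A generic perturbation guarantees that the resulting polytope is simplicial, so $\Delta_{\widetilde I}$ embeds into the boundary of a simplicial $n$-polytope $P$ on $r$ vertices, giving $f_i(\Delta_{\widetilde I})\le f_i(\partial P)$.

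Now McMullen's Upper Bound Theorem asserts that among simplicial $n$-polytopes on $r$ vertices the cyclic polytope $C_{n,r}$ maximizes $f_i$ in every dimension, so $f_i(\partial P)\le f_i(\partial C_{n,r})=C_{i,n,r}$. Combining with the previous inequalities yields $\beta_i(I)\le C_{i,n,r}$. For the refinement at $i=n-1$, observe that the Scarf complex is acyclic (it supports a free resolution of the nonzero module $S/\widetilde I$), hence it cannot be a full simplicial $(n-1)$-sphere; at least one facet of $\partial P$ is missing from $\Delta_{\widetilde I}$, which gives the sharpened bound $\beta_{n-1}(I)\le C_{n-1,n,r}-1$.

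The main obstacle I anticipate is justifying the embedding step: one needs to ensure simultaneously that the Scarf complex of $\widetilde I$ sits inside the boundary of a \emph{simplicial} polytope of the right dimension with exactly $r$ vertices. This is handled by the hull-resolution construction together with a generic choice of lift (so that all bounded faces of the hull are simplices of dimension at most $n-1$), but the bookkeeping between the staircase geometry and the polytope boundary is the delicate point; the deformation argument in the first step must be chosen to be compatible with this lift so that passing from $I$ to $\widetilde I$ does not increase the vertex count.
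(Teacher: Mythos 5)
Your proposal is correct, and the paper offers no proof of its own here: the statement is quoted directly from \cite[Theorem 6.29]{MS}, whose proof is exactly the argument you reconstruct --- deform $I$ to a (strongly) generic monomial ideal without decreasing Betti numbers, identify the Betti numbers of the generic ideal with the face numbers of its Scarf complex, embed that complex via the hull/lifting construction into the boundary of a simplicial $n$-polytope on the $r$ lifted generators, and invoke McMullen's Upper Bound Theorem, with the $-1$ at $i=n-1$ coming from acyclicity of the Scarf complex (it supports a free resolution, so it cannot be the whole $(n-1)$-sphere). The delicate point you flag, choosing the perturbation so that the lifted configuration is simplicial and the generator count is preserved, is precisely how the argument is completed in Bayer--Peeva--Sturmfels and in \cite{MS}, so your route is essentially identical to the cited one.
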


\begin{thm}
    Let $H$ be a numerical semigroup generated by $\langle n_1,n_2,n_3\rangle$. Let $s = \wdd(\tilde H) = \min(n_1-1,b)$. Then 
    \begin{enumerate}
        \item $\beta_0(I_H^*) \le s+1,$
        \item $\beta_1(I_H^*) \le 3s - 3,$
        \item $\beta_2(I_H^*) \le 2s - 3.$
    \end{enumerate}
    In particular, $\beta_i(I_H^*) \le \beta_i(I_{\tilde H}^*)$ for all $i$.
\end{thm}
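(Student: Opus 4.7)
The plan is to reduce the statement to the single estimate $\mu(I_H^*) \le s+1$ and then invoke the cyclic polytope bound of Theorem~\ref{thm_upperbound}. By Lemmas~\ref{lem_ci_2}, \ref{lem_ci_5}, and \ref{lem_nci_3} (and in the remaining cases by direct inspection), the minimal generators of $I_H^*$ form a Gröbner basis with respect to the reverse lexicographic order, so $\operatorname{in}(I_H^*)$ is a monomial ideal with $\mu(\operatorname{in}(I_H^*)) = \mu(I_H^*) =: r$ and $\beta_i(I_H^*) \le \beta_i(\operatorname{in}(I_H^*))$. Since the cyclic $3$-polytope on $r$ vertices has $3r-6$ edges and $2r-4$ facets, Theorem~\ref{thm_upperbound} yields $\beta_1(I_H^*) \le 3r-6$ and $\beta_2(I_H^*) \le 2r-5$; both bounds reduce to the claimed ones once $r \le s+1$.

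The heart of the argument is therefore the estimate $\mu(I_H^*) \le s+1 = \min(n_1-1, b) + 1$, which I would prove by revisiting each of the cases of Sections~\ref{sec_complete_intersection} and \ref{sec_non_ci}. In the easy cases (Lemmas~\ref{lem_ci_1}, \ref{lem_ci_3}, \ref{lem_ci_4}, \ref{lem_nci_1}, \ref{lem_nci_2}) the explicit description gives $\mu(I_H^*) \le 3$, and the inequality is immediate for any embedding-dimension-three semigroup (in which one always has $s \ge 2$). In the non-trivial cases (Lemmas~\ref{lem_ci_2}, \ref{lem_ci_5}, \ref{lem_nci_3}) the generators are indexed by the staircase sets $A$ and $B$ together with the pair $(g,h)$, and the rough bounds $c_3+2$ or $\max(r_{13}, r_{23})+3$ obtained there must be sharpened. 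The bound $\mu(I_H^*) \le n_1$, relevant when $s = n_1 - 1$, follows from $c_3 \le n_1$ together with the observation already embedded in those proofs that $h$ and $g$ are never simultaneously minimal generators. The bound $\mu(I_H^*) \le b+1$, relevant when $s = b$, requires exploiting the identities~\eqref{eq_ci_2_1}, \eqref{eq_ci_5_1}, and~\eqref{eq_nci_3_1}: the staircases $A$ and $B$ are truncated at height $\beta_h$ or $\epsilon$, and $\beta_h$ is directly controlled by $b/d$, so that $|A| + |B| \le b/d + O(1)$.

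Finally, the ``in particular'' statement $\beta_i(I_H^*) \le \beta_i(I_{\tilde H}^*)$ follows by applying the same case analysis to $\tilde H$ (where $a = 1$ and hence $d = 1$), or via the explicit minimal free resolution of the tangent cone of a generalized arithmetic sequence~\cite{SZ}; one verifies that $\tilde H$ realizes equality in all three bounds, so $\beta_0(I_{\tilde H}^*) = s+1$, $\beta_1(I_{\tilde H}^*) = 3s-3$, and $\beta_2(I_{\tilde H}^*) = 2s-3$. The hardest part of the plan is the $b$-dependent refinement of $\mu(I_H^*)$: the sets $A$ and $B$ are built by iterated Farey-type truncations, and replacing the bound $c_3 = r_{13} + r_{23}$ by $b/d + O(1)$ requires carefully translating the inequalities $\beta < \epsilon \le \beta_h \le b/d$ (and the parallel $\gamma < (b-a)/d$) into a joint bound that is uniform across all three non-trivial subcases.
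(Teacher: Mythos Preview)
Your overall strategy---reduce everything to $\mu(I_H^*)\le s+1$ and then invoke the cyclic-polytope bound on $\operatorname{in}(I_H^*)$---is exactly the paper's. The difference lies in how you plan to establish $\mu(I_H^*)\le s+1$ in the three non-trivial cases. You propose to \emph{sharpen} the bounds $c_3+2$ and $\max(r_{13},r_{23})+3$ by refining the staircase counts $|A|+|B|$ through $\beta_h$ and $\epsilon$; the paper does something simpler. It leaves those bounds untouched and instead bounds $c_3$, $r_{13}$, $r_{23}$ themselves. For $s=n_1-1$: your claim $c_3\le n_1$ is too weak (it would only yield $\mu\le n_1+2$, since the ``$g,h$ not both minimal'' observation is already baked into the bound $c_3+2$); the paper uses the Frobenius number of $\langle n_1,n_2\rangle$ to get $c_3 n_3\le (n_1-1)(n_2-1)+1$, hence $c_3<n_1-1$. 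For $s=b$: the identities \eqref{eq_ci_2_1}, \eqref{eq_ci_5_1}, \eqref{eq_nci_3_1} with $\alpha_h,\beta_h\ge 1$ and the relevant $r_{ij}\ge 1$ immediately give $c_2<b/d$ or $c_1<(b-a)/d$; combined with the elementary inequalities $c_3<c_2$ (when $r_{21}=0$), $c_3<c_1$ (when $r_{12}=0$), and $r_{13}<c_1$, $r_{23}<c_2$ (in the non-CI case, from $c_i n_i = r_{ij}n_j + r_{ik}n_k$), this yields $\mu(I^*)\le b$ in one line per case---no Farey-type refinement is needed. For the small cases $\mu(I^*)\le 3$, the paper invokes the Cohen--Macaulayness criterion of \cite{H2} to get $\beta_1\le 2$ and $\beta_2=0$ directly, though your direct inspection of the explicit generating sets would also suffice.
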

\begin{proof} By \cite{H2}, if $\mu(I^*) \le 3$, $I^*$ is Cohen-Macaulay. Hence, $\beta_1(I^*) \le 2$ and $\beta_2(I_H^*) = 0$. Thus, we may assume that $\mu(I^*) \ge 4$. By Lemmas \ref{lem_ci_1}, \ref{lem_ci_2}, \ref{lem_ci_3}, \ref{lem_ci_4}, \ref{lem_ci_5}, \ref{lem_nci_1}, \ref{lem_nci_2}, \ref{lem_nci_3} it remains to consider the following cases: $r_{21} = 0$, $r_{12} = 0$ and $c_2 > r_{21} + r_{23}$, and $r_{ij} \neq 0$ and $c_2 > r_{21} + r_{23}$. We have $\beta_i(I) \le \beta_i(\operatorname{in}(I))$ for any homogeneous ideal $I$, where $\operatorname{in}(I)$ is the initial ideal of $I$ with respect to any monomial order. By Lemmas \ref{lem_ci_2}, \ref{lem_ci_5}, \ref{lem_nci_3}, Theorem \ref{thm_upperbound}, and the fact that $C_{1,3,r} = 3r - 6$ and $C_{2,3,r} = 2r - 4$ it suffices to prove that $\mu(I^*) \le s+1$ in these cases.

First, note that we always have $c_3 < n_1 - 1$. Since $c_3$ is the smallest positive integer such that $c_3 n_3 \in \langle n_1,n_2 \rangle$. Furthermore, the Frobenius number of $\langle n_1,n_2 \rangle$ is $(n_1-1)(n_2-1)$. Hence,
$c_3 n_3 \le (n_1-1)(n_2-1) + 1$. Thus $c_3 < n_1-1$.

We may now assume that $n_1 > b = n_3 - n_1$.

\noindent \textbf{Case 1.} $r_{21} = 0$. By \eqref{eq_ci_2_1}, $\alpha_h r_{12} + \beta_h c_2 = b/d$. Hence, $c_2 < b/d$. Thus $c_3 < c_2 \le b/d-1$. By Lemma \ref{lem_ci_2}, $\mu(I^*) \le c_3 + 2 \le b$.

\noindent \textbf{Case 2.} $r_{12} = 0$ and $c_2 > r_{21} + r_{23}$. By \eqref{eq_ci_5_1}, $\alpha_h c_1 + \beta_h r_{21} = (b-a)/d$, thus $c_1 < (b-a)/d$. Hence, $c_3 < c_1 \le b-2$. By Lemma \ref{lem_ci_5}, $\mu(I^*) \le c_3 + 2 < b$.

\noindent \textbf{Case 3.} $r_{ij}\neq 0$ for all $i,j$ and $c_2 > r_{21} + r_{23}$. By \eqref{eq_nci_3_1}, we have
    \begin{align*}
        \alpha_h c_1 + \beta_h r_{21} &= (b-a)/d \\
        \alpha_h r_{12} + \beta_h c_2 &= b/d.
    \end{align*}
 In particular, $c_1 < (b-a)/d$ and $c_2 < b/d$. Thus, $r_{13} < c_1-1 \le b-3$ and $r_{23} < c_2-1 \le b-2$. By Lemma \ref{lem_nci_3}, $\mu(I^*) \le \max(r_{13},r_{23}) + 3 \le b$.

 The final statement that $\beta_i(I_H^*) \le \beta_i(I_{\tilde H}^*)$ follows immediately as the formula for $\beta_i(I_{\tilde H}^*)$ is given in \cite[Proposition 2.5]{HS} and \cite[Theorem 4.1]{GSS}.
\end{proof}

\begin{rem} \begin{enumerate}
    \item Herzog and Stamate \cite[Conjecture 2.4]{HS} only conjectured the bound for the number of minimal generators of $I_H^*$. We believe it should hold for all Betti numbers as well. 
    \item In \cite{S}, Stamate proved that when $n_1 \ge k_{a,b}$, $\mu(I_H) = \mu(I_H^*)$, where
$$k_{a,b} = b \max (\frac{a}{d}, \frac{b-a}{d} - 1).$$
This can be deduced from our description of $I_H^*$ as well.
\end{enumerate}
\end{rem}


\end{document}